
\documentclass[12pt,twoside]{article}
\usepackage{amsmath}
\usepackage{amssymb}
\usepackage{amsthm}
\usepackage{url}

\DeclareMathOperator{\R}{\mathfrak{R}}
\DeclareMathOperator{\riem}{rm}

\DeclareMathOperator{\ric}{rc}

\DeclareMathOperator{\scal}{sc}
\DeclareMathOperator{\dd}{\,d}


\DeclareMathOperator{\id}{I}

\DeclareMathOperator{\dist}{dist}




\DeclareMathOperator{\vol}{Vol}
\DeclareMathOperator{\inj}{inj}

\newcommand{\D}{\nabla}


\newcommand{\dv}{\,\mathrm{dvol}}


\newcommand{\td}[1]{\frac{\mathrm{d}}{\mathrm{d}#1}}

\newcommand{\pd}[1]{\frac{\partial}{\partial #1}}

\newcommand{\abs}[1]{\lvert{#1}\rvert}
\newcommand{\norm}[1]{\|{#1}\|}



\newcommand{\e}[1]{\mathrm{e}^{#1}\,}
\newcommand{\ee}{\mathrm{e}}


\newcommand{\pbar}{\bar{p}}

\newcommand{\tbar}{\bar{t}}
\newcommand{\gtil}{\tilde{g}}


\newcommand{\vbh}[1]{\vol_{\mathrm{H}}(#1)}





\newcommand{\rn}{\mathbb{R}}

\theoremstyle{definition} 
\newtheorem{definition}{Definition}[section]

\theoremstyle{plain}      
\newtheorem{thm}[definition]{Theorem}
\newtheorem{lemma}[definition]{Lemma}
\newtheorem{cor}[definition]{Corollary}
\newtheorem{prop}[definition]{Proposition}
\newtheorem{claim}[definition]{Claim}
\theoremstyle{remark}
\newtheorem{rmk}[definition]{Remark}

\title{On Uniqueness of Complete Ricci Flow Solution with
  Curvature Bounded from Below }
\author{Li Sheng\footnote{Department of Mathematics, Sichuan University,
    Chengdu, 610064, China. Email: \emph{lshengscu@gmail.com}}
  , Xiaojie Wang\footnote{Mathematics Department, Stony Brook
    University, Stony Brook, NY 11794-3660, USA Email:
    \emph{wang@math.sunysb.edu}}}

\begin{document}
 
\maketitle

\begin{abstract}
  Let $(M,g)$ be a complete noncompact non-collapsing
  $n$-dimensional riemannian 
  manifold, whose complex sectional curvature is bounded from below
  and scalar curvature is bounded from above. Then ricci
  flow with above 
  as its initial data, has at most one solution in the class of
  complete riemannian metric with complex sectional curvature bounded
  from below.  
\end{abstract}

\section{Introduction}
\label{sec:introduction}

Let $M$ be a differentiable manifold, $I$ be an interval, and
$g(t)\equiv g_t\equiv g(x,t)$, 
$(x,t)\in M\times I$, be
a family of complete riemannian metrics on $M$, which is parameterized
by $t\in I$. Let $\ric_t$ be ricci curvature of $g_t$. 
The ricci flow 
is a weakly parabolic system of partial differential equations defined by
\begin{equation*}
\pd{t}g_t=-2\ric_t.
\end{equation*}
It was first
introduced by Richard Hamilton in his celebrated 1982
paper\cite{0504.53034} and used to deform riemannian metrics.
 Thereafter, as a powerful tool,
ricci flow helped changing the landscape of differential
geometry greatly. For example, it is a crucial tool to prove Poincar\'e
conjecture and differentiable sphere theorem. See, for example,
 \cite{2003math......3109P} and \cite{MR2449060}.  

Ricci flow as a system of partial differential equations, the short time
existence and uniqueness of the solution to initial value problem
are among the most fundamental questions. When underlying manifold
is compact without boundary, they were proved by Hamilton 
in the same paper mentioned
above.  Shortly after, DeTurck\cite{MR697987} introduced a
technique now commonly named after himself, to 
simplify the proof of existence. Together with the help from 
harmonic heat flow, Hamilton used DeTurck's trick to give a new proof to the
uniqueness in \cite{MR1375255}. 

For the case of compact manifold with boundary, which we will not discuss
in this paper, interested readers may consult
\cite{2012arXiv1210.0813G} and bibliography  therein. 

When the manifold is complete non-compact without boundary, Shi
\cite{MR1001277} first constructed a complete
solution to every initial metric with bounded riemannian curvature. If
the curvature bound is $k_0>0$, then  there 
exists a $T=T(n,k_0)$, such that his solution exists on $[0,T]$ and satisfies
\begin{equation*}
  \abs{\D^m \riem}^2\leq \frac{c_m}{t^n}
\end{equation*}
for some constant $c_m>0$.
Cabezas-Rivas and Wilking\cite{2011arXiv1107.0606C} improved Shi's result 
by removing the curvature upper bound for the initial data, and only
assuming that it is non-collapsing and has nonnegative complex
sectional curvature (see below for definition). Furthermore, their
solution preserves
nonnegativity of complex sectional curvature. 

For the uniqueness of solution to classic heat equation,  in contrast,
one cannot 
take  the uniqueness of solution for granted. It is Tychonoff who had not only
discovered a non-zero 
solution to heat equation on $\rn\times [0,\infty)$ with vanishing initial
data, but also proved uniqueness by assuming solutions at most exponential
growth (see e.g. John\cite{MR831655} page 211 and 217). However, in
the case of ricci flow, the situation is revealed to 
be subtle by the discovery that, in dimension two and three, the only
solution to ricci flow with 
euclidean initial data is euclidean. We will discuss this later.

Nonetheless, following spirit of Tychonoff, Chen and Zhu established a
uniqueness theorem in the 
class of metrics with bounded curvature.
\begin{thm}[Chen-Zhu \cite{MR2260930}]
  \label{thm:4}
  Let $(M^n,g)$ be a complete noncompact riemannian manifold of
  dimension $n$ with bounded curvature. Let $g_1(t)$ and $g_2(t)$
  be two solutions to the ricci flow on $M\times [0,T]$ with $g$ as
  their common
  initial data and with bounded curvatures. Then $g_1=g_2$ 
  for all $(x,t)\in M^n\times [0,T]$.
\end{thm}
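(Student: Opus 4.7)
The plan is to adapt Hamilton's DeTurck-trick proof of uniqueness from the compact case to the complete non-compact setting, where the bounded curvature hypothesis is what makes all the necessary heat-type equations well-posed. Recall that the Ricci flow itself is only weakly parabolic because of its diffeomorphism invariance, so uniqueness cannot be obtained by a direct maximum principle on $g_1(t)-g_2(t)$. Instead, I would intertwine each solution with a harmonic map heat flow into a fixed background metric, which converts each Ricci flow into a strictly parabolic Ricci--DeTurck flow, and then use uniqueness at the Ricci--DeTurck level.

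Concretely, fix the common initial metric $g=g_1(0)=g_2(0)$ as a background metric on $M$. For $i=1,2$, I would construct a family of diffeomorphisms $\phi_i(t):M\to M$ with $\phi_i(0)=\id$ by solving the harmonic map heat flow
\begin{equation*}
\pd{t}\phi_i(t)=\Delta_{g_i(t),g}\phi_i(t),
\end{equation*}
where the Laplacian is taken with domain metric $g_i(t)$ and target metric $g$. Then the pulled-back metrics $\tilde g_i(t):=\phi_i(t)^*g_i(t)$ satisfy the Ricci--DeTurck equation
\begin{equation*}
\pd{t}\tilde g_i=-2\ric_{\tilde g_i}+\lie{W(\tilde g_i,g)}\tilde g_i,
\end{equation*}
which is strictly parabolic. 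Since $\tilde g_1(0)=\tilde g_2(0)=g$, uniqueness for the Ricci--DeTurck flow in the bounded curvature class would give $\tilde g_1(t)=\tilde g_2(t)$. Finally, I would argue that $\phi_1(t)$ and $\phi_2(t)$ are forced to coincide, since each solves the same ODE for fixed background, which then pushes forward to $g_1(t)=g_2(t)$.

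The bulk of the technical work, and the main obstacle, is making the harmonic map heat flow step work on a complete non-compact manifold. I would need: (i) short-time existence of $\phi_i(t)$ on $M\times[0,T']$ for some $T'>0$, with a priori estimates that keep $\phi_i(t)$ close to the identity and in particular a diffeomorphism; (ii) an $\lp{\infty}$ or energy-type uniqueness statement for harmonic map heat flow among maps with controlled distortion; and (iii) uniqueness of the Ricci--DeTurck flow in the class of complete metrics uniformly equivalent to $g$ with bounded curvature. Each of these requires Shi-type derivative estimates for the heat equation of sections, and careful cut-off arguments since $M$ is non-compact. The bounded curvature hypothesis on $g_1(t),g_2(t)$ is precisely what feeds these estimates: it controls the coefficients of the harmonic map heat flow and of Ricci--DeTurck, and it ensures the injectivity radius does not collapse along the deformation.

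Once these three ingredients are in place, the actual uniqueness deduction is formal: on the interval $[0,T']$ one has $\tilde g_1=\tilde g_2$ and $\phi_1=\phi_2$, hence $g_1=g_2$; a standard open--closed argument then extends this to all of $[0,T]$. The substantive risk is in step (i)--(iii): one must verify that the short-time interval $T'$ can be chosen depending only on the curvature bound, so that iteration covers $[0,T]$, and one must rule out any ambiguity in $\phi_i(t)$ coming from the non-compactness, for which I would impose a growth condition on $\phi_i(t)-\id$ analogous to Tychonoff's condition for the heat equation and show it is automatic under the bounded curvature assumption.
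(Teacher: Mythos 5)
The paper does not prove Theorem~\ref{thm:4}; it is quoted verbatim from Chen--Zhu \cite{MR2260930} and used as a black box. Your sketch is, in substance, a faithful reconstruction of the strategy Chen--Zhu actually carry out in that reference: run the harmonic map heat flow with domain metric $g_i(t)$ and fixed target $g$ to generate diffeomorphisms $\phi_i(t)$, pass to the strictly parabolic Ricci--DeTurck flow for $\tilde g_i=\phi_i(t)^*g_i(t)$, obtain $\tilde g_1=\tilde g_2$ by a maximum-principle/energy argument valid under the bounded-curvature and uniform-equivalence hypotheses, and then recover $g_1=g_2$ from uniqueness of the ODE for the diffeomorphisms. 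You also correctly identify where the real work is: short-time existence and a priori (Shi-type) estimates for the harmonic map heat flow on a complete noncompact manifold, keeping $\phi_i(t)$ a diffeomorphism uniformly close to the identity, and a uniqueness statement for Ricci--DeTurck in the class of metrics uniformly equivalent to $g$ with bounded curvature; these are precisely the technical lemmas Chen--Zhu establish via cutoff functions and derivative estimates. One small imprecision: $\phi_1=\phi_2$ does not follow merely because ``each solves the same ODE for fixed background'' --- the harmonic map heat flow equations for $\phi_1$ and $\phi_2$ have \emph{a priori} different coefficients (they depend on $g_i(t)$). The clean way to close the loop, as in Chen--Zhu, is to note that once $\tilde g_1=\tilde g_2=:\tilde g$, each $g_i(t)$ is recovered from $\tilde g(t)$ by integrating the (now common) DeTurck vector field $W(\tilde g,g)$, and ODE uniqueness then yields $\phi_1=\phi_2$ and hence $g_1=g_2$.
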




Also along this spirit, Kotschwar \cite{2012arXiv1206.3225K} weakened
the condition to  curvature growth rate 
being at most quadratic and equivalence of $g(t)$ to $g(0)$.



In lower dimensions, however, the ricci flow is ``better''
than the classic heat equation. 
Topping\cite{MR2832165}\cite{2013arXiv1305.1905T}  proved that for
any initial  
metric on  two dimensional manifold, which could be noncomplete
and/or with unbounded curvature, ricci flow has one and only one
instantaneously complete solution. Readers may also see
\cite{MR2520796} for related discussion on surface.

In dimension three, Chen\cite{MR2520796} proved that for any
complete noncompact  initial metric, if it has nonnegative and bounded
sectional curvature, then on some time interval depending only on
initial curvature bound, there exists a unique complete ricci flow
solution. As a corollary, the only complete ricci flow solution on $\rn^3$
with euclidean initial metric is euclidean. His idea of the proof is first to 
prove that the curvature is bounded
over some time interval, then the uniqueness follows from the theorem
of Chen-Zhu.  

Another way to weaken the condition is
replacing the bilateral curvature bound by unilateral. We first
consider the case with curvature upper bound.
On riemann surface, Chen and Yan  \cite{MR2721664}
proved that curvature upper bound is sufficient. It is worth noting
that their
theorem does not require curvature lower bound for the initial metric.

In general dimension, when we assume in addition that the initial metric
is with bounded
curvature, one can prove the uniqueness by assuming only
the upper bound of sectional curvature. In fact, this is a
straight forward consequence of Corollary 2.3 in \cite{MR2520796},
which says when scalar curvature is bounded from below initially, it
is preserved along the flow.  
So we can apply theorem \ref{thm:4} to obtain uniqueness.





Then we turn to the case with curvature lower bound, which is the main
purpose of this paper.
Before stating our main result, we first remind
readers of the definition of complex sectional curvature. 

Let $(M^n,g)$ be a
$n$-dimensional riemannian manifold  and
$T^cM=TM\otimes\mathbb{C}$ be complexified tangent bundle.
Let $\D$ be Levi-Civita connection, and $\riem$ be riemannian
curvature defined by 
\begin{equation*}
  \riem(u,v,w,z)=g(\D_u\D_vw-\D_v\D_uw-\D_{[u,v]}w,z)
\end{equation*}
where $u,v,w,z\in T_pM$ for any $p\in M$.  And 
\begin{equation*}
  \R:\wedge^2TM\mapsto\wedge^2TM
\end{equation*}
is the riemannian curvature operator induced by $\riem$. Let $\scal$
be the scalar curvature.
We extend both metric $g$ and
curvature tensor $\riem$ to be $\mathbb{C}$-multilinear maps
\begin{equation*}
  g:S^2T^cM\to \mathbb{C}\quad
  \riem:S^2(\wedge^2T^cM)\to \mathbb{C}.
\end{equation*}
\begin{definition}
  \label{def:1}
For any $2$-dimensional complex subspace $\sigma\in
T_p^cM$, the complex sectional curvature 
is defined by
\begin{equation*}
   \sec^c(\sigma):=\riem(u,v,\bar{v},\bar{u})=g(\R(u\wedge
   v),\overline{u\wedge v}),
\end{equation*}
for any unitary orthogonal vectors $u,v\in \sigma$ 
i.e. $g(u,\bar{u})=g(v,\bar{v})=1$ and $g(u,\bar{v})=0$, where, e.g. 
$\bar{u}$ is the complex conjugate of $u$. 
\end{definition}

We can now state the main result. 
\begin{thm}
  \label{thm:1}
  Let $(M^n,g)$ be a complete noncompact $n$-dimensional riemannian
  manifold, with complex sectional curvature $\sec^c \geq -1$
  and $\scal\leq s_0$ for some $s_0 > 0$. In addition, the volume of every unit
  ball is bounded from below uniformly by some constant
  $v_0>0$.  Let $g_i(t)$ $i =  1,2$ be 
  two solutions to the ricci flow  on
  $M\times [0,T]$ with same initial data $g_i(0) = g$.  If both
  $g_i(t)$ are
  complete riemannian metrics with $\sec^c(g_i(t))\geq -1$, for all $t\in [0,T]$,
  then there exists a constant $C_n>0$ depending only on dimension $n$,
  s.t. $g_1(t) = g_2(t)$ on $ M\times [0,\min\{\epsilon,T\}]$, where $\epsilon=\min\{v_0/2C_n,1/(n-1)\}$.   
\end{thm}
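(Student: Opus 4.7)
The plan is to reduce Theorem~\ref{thm:1} to Chen--Zhu's theorem (Theorem~\ref{thm:4}) by establishing that both $g_1(t)$ and $g_2(t)$ have uniformly bounded curvature on $M \times [0,\epsilon]$. The first observation is purely algebraic: the bound $\sec^c(g_i(t)) \ge -1$ forces $\ric \ge -(n-1)g_i(t)$, so every Ricci eigenvalue $\lambda_j$ satisfies $-(n-1) \le \lambda_j \le \scal + (n-1)^2$, yielding $|\riem(g_i(t))| \le C_n(|\scal(g_i(t))| + 1)$ pointwise. It thus suffices to bound $\scal(g_i(t))$ from above on a short time interval.

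Next, the lower Ricci bound $\ric \ge -(n-1)g_i(t)$ implies $\partial_t g_i \le 2(n-1)g_i$, hence $g_i(t) \le e^{2(n-1)t} g$ for $t \le 1/(n-1)$. In this regime distances and ball volumes are uniformly comparable to those at $t = 0$, and the hypothesis $\vol_g(B_g(x,1)) \ge v_0$ upgrades to
\[
\vol_{g_i(t)}(B_{g_i(t)}(x,1)) \ge v_0/2
\]
as long as $t \le v_0/(2C_n)$ for a suitable dimensional constant. Together with $\sec^c(g_i(t)) \ge -1$, this uniform non-collapsing yields, via Bishop--Gromov-type comparison, a uniform local Sobolev inequality on $(M, g_i(t))$ valid throughout $t \in [0, \epsilon]$.

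Finally, the scalar-curvature evolution gives $\partial_t \scal = \Delta \scal + 2|\ric|^2 \le \Delta \scal + C_n(\scal^2 + 1)$, and I would combine this differential inequality with the uniform Sobolev estimate to run a parabolic Moser / De~Giorgi iteration starting from the initial bound $\scal \le s_0$. The output is a uniform $L^\infty$ bound on $\scal(g_i(t))$ over $M \times [0,\epsilon]$, depending only on $n$, $v_0$, and $s_0$. The algebraic step of paragraph one then gives $|\riem(g_i(t))|$ uniformly bounded, placing both $g_i(t)$ into the bounded-curvature class covered by Chen--Zhu, so Theorem~\ref{thm:4} immediately yields $g_1 \equiv g_2$ on $M \times [0,\epsilon]$.

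The main obstacle is executing the Moser-type iteration cleanly on a noncompact manifold with a time-dependent metric and a quadratic nonlinearity $|\ric|^2 \sim \scal^2$; in particular, one must verify that the local Sobolev constant produced by the non-collapsing assumption is strong enough to absorb the $|\ric|^2$ term for the duration the iteration must run. The two competing time restrictions $t \le 1/(n-1)$ and $t \le v_0/(2C_n)$, guaranteeing metric comparability and preservation of non-collapsing respectively, are precisely what fix the form of $\epsilon$ in the statement of the theorem.
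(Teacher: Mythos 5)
Your overall reduction to Chen--Zhu is the same as the paper's (Theorem~\ref{thm:1} follows from Theorem~\ref{thm:3} plus Theorem~\ref{thm:4}), but the way you propose to bound the curvature has a gap that I don't see how to close, and two intermediate steps are misjustified.

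\textbf{The central gap: the Moser iteration has no foothold.} You propose to bound $\scal_t$ uniformly by coupling $\partial_t \scal \leq \Delta \scal + C_n(\scal^2+1)$ with a local Sobolev inequality and running a parabolic Moser/De~Giorgi iteration ``starting from the initial bound $\scal \leq s_0$.'' On a noncompact manifold, a Moser iteration for a quadratic reaction term needs an a priori integral bound on $\scal$ in $L^p$ for $p > n/2$ over a parabolic cylinder before it can close; the known integral control here is Petrunin's $L^1$ bound $\int_{B(p,1)}\scal \leq C(n)$, which is far too weak for $n\geq 2$. The initial bound $\scal(\cdot,0)\leq s_0$ cannot substitute for this: for superlinear reaction--diffusion on a noncompact manifold there is no maximum principle without a priori growth control, and in fact producing precisely such a priori control is the entire content of Theorem~\ref{thm:3}. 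This is why the paper does not attempt a direct pointwise bound at all. It instead proves the weaker estimate $\scal_t \leq C/t$ by a compactness/blow-up argument (Perelman point-picking in Claim~\ref{claim:2}, Hamilton's compactness theorem, and Lemma~4.5 of Cabezas-Rivas--Wilking to derive a contradiction with the preserved non-collapsing), and then upgrades $C/t$ to a uniform bound using Chen/Simon's local curvature estimate, Theorem~\ref{thm:5}. The $C/t$ intermediate step is structurally essential and I don't see how to avoid it by PDE estimates alone.

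\textbf{Two misjustifications in the preliminary steps.} (a) You assert that the two-sided bound on Ricci eigenvalues implies $|\riem|\leq C_n(|\scal|+1)$. This is false in dimension $\geq 4$; bounded Ricci does not control the full curvature tensor. The correct route, which the paper takes in Lemmas~\ref{lem:3} and~\ref{lem:4}, is that $\sec^c \geq -1$ gives $\sec \geq -1$, and a sectional-curvature lower bound together with an upper bound on $\scal$ forces a two-sided bound on all sectional curvatures, hence on the curvature operator. The conclusion you want is true, but not for the reason you give. (b) Your preservation of non-collapsing relies only on $g_i(t)\leq \ee^{2(n-1)t}g$, but that inequality gives an \emph{upper} bound on volumes, not a lower bound. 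To bound $\vol_t$ from below you must control $\frac{d}{dt}\vol_t(B_0(p,1)) = -\int \scal_t \dv_t$, which requires bounding $\int_{B}\scal_t$ from above; this is where the paper invokes Petrunin's theorem (Theorem~\ref{thm:101}). Without it your volume lower bound does not follow.

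Once $|\riem|$ is known to be uniformly bounded on $M\times[0,\epsilon]$, the appeal to Theorem~\ref{thm:4} to conclude uniqueness is exactly the paper's final step and is fine.
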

We need the volume lower bound for unit ball of initial metric to obtain
injectivity radius lower bound of $g(t)$ when $t>0$. This is crucial to
establish Cheeger-Gromov convergence of ricci flows in our proof. Actually,
the curvature condition alone cannot guarantee the volume lower
bound, which is shown by the following example. Consider a
rotationally 
symmetric metric $g=dr^2 + \e{-2r}ds^2$ on a half-cylinder. Its
curvature is $-\frac{(\e{-r})''}{\e{-r}}=-1$. Closing up one end at $r=0$ by
putting on a cap, smoothing if necessary, we get a complete metric
with bounded curvature.  But the volume of unit ball 
goes to zero when $r$ increases to infinity. By crossing with $\rn^n$ we
get examples for higher dimensions.  

With Theorem \ref{thm:4}, above theorem is an immediate
corollary of the following one.
\begin{thm}
  \label{thm:3}
  Let $(M^n,g)$ be as in above theorem. Let $g(t)$ be a ricci flow solution
  on $M^n\times [0,T]$ with $g(0)=g$. If $\sec^c_t\geq -1$  
  and $g(t)$ is complete for any  $t\in [0,T]$,
  then $\riem_t$ is uniformly bounded on
  $M\times [0,\min\{\epsilon,T\}]$ for the same $\epsilon$ in above theorem.
\end{thm}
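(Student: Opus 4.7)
My plan is to reduce the Riemann curvature bound to a scalar curvature bound via a pointwise algebraic inequality, and then to argue by contradiction via a blow-up plus Cheeger-Gromov compactness scheme, with the volume lower bound $v_0$ supplying the injectivity radius estimate. The algebraic reduction is this: the condition $\sec^c_t \geq -1$ is equivalent to the shifted operator $\R_t + \id$ being positive semi-definite on $\wedge^2 T^c M$. For any such symmetric operator $A$ one has $|A| \leq \tr A$, and $\tr(\R_t + \id) = \scal_t/2 + \binom{n}{2}$, whence $|\riem_t| \leq C_n(1 + \scal_t)$ and $|\ric_t|^2 \leq C_n(1 + \scal_t)^2$. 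The problem thus reduces to a uniform upper bound on $\scal_t$.

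\textbf{Blow-up and compactness.} Suppose no such bound exists. A Hamilton-style point-picking yields a sequence $(x_k, t_k) \in M \times [0, \min(\epsilon, T)]$ with $Q_k := |\riem|(x_k, t_k) \to \infty$ and $|\riem| \leq 2 Q_k$ on backward parabolic cylinders at $(x_k, t_k)$ whose size tends to infinity in the rescaled metric $\tilde g_k(s) := Q_k \cdot g(t_k + s/Q_k)$. By construction $\sec^c(\tilde g_k) \geq -1/Q_k \to 0$, and the initial scalar bound rescales to $\scal(\tilde g_k)(\cdot, -Q_k t_k) \leq s_0/Q_k \to 0$. The volume lower bound enters as follows: along $g(t)$, $\sec^c \geq -1$ gives $\ric \geq -(n-1) g$, so both distance distortion under the flow and Laplacian comparison for $\dist(\cdot, x_0, g)$ stay tame on $t \leq 1/(n-1)$, which is the origin of the first threshold in $\epsilon$. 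The initial lower bound on $\vol(B^g(\cdot, 1))$ then transports forward to a $g(t_k)$-volume lower bound on suitable balls around $x_k$ provided $t_k \leq v_0/(2C_n)$, the origin of the second threshold. Rescaling by $Q_k$ and invoking Cheeger's lemma with $|\riem|(\tilde g_k) \leq 2$ yields a uniform positive lower bound on $\inj_{\tilde g_k}(x_k, 0)$, so Hamilton's compactness theorem produces a smooth pointed limit $(M_\infty, g_\infty, x_\infty)$, a complete Ricci flow on an interval ending at $0$, with $\sec^c \geq 0$, bounded curvature, and $|\riem|_{g_\infty}(x_\infty, 0) = 1$.

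\textbf{Contradiction.} The rescaled bound $\scal(\tilde g_k)(\cdot, -Q_k t_k) \leq s_0/Q_k$ passes to the limit to force $\scal_{g_\infty}$ to vanish identically at the initial slice of the limit flow (or, if the limit is ancient, in the asymptotic past via the trace Harnack inequality, which for $\sec^c \geq 0$ gives $\partial_s \scal \geq 0$). Applying the algebraic reduction to the limit, where the nonnegativity of $\R_{g_\infty} + \id$ becomes nonnegativity of $\R_{g_\infty}$ so that $|\R_{g_\infty}| \leq \tr(\R_{g_\infty}) = \scal_{g_\infty}/2$, then forces $\riem_{g_\infty} \equiv 0$ on that slice. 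Chen-Zhu uniqueness (Theorem \ref{thm:4}) applied to the bounded-curvature limit yields $g_\infty$ flat for all times, contradicting $|\riem|_{g_\infty}(x_\infty, 0) = 1$.

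\textbf{Main obstacle.} The hard part is the quantitative non-collapsing step: the blow-up times $t_k$ can lie anywhere in $[0, \epsilon]$, so the volume transport along $g(t)$ must be robust both as $t_k \to 0$ and as $t_k$ approaches $\epsilon$. The explicit form of $\epsilon = \min(v_0/(2C_n), 1/(n-1))$ is precisely the largest time interval for which this transport delivers a uniform injectivity radius bound at the blow-up scale, and hence for which Hamilton compactness together with the rigidity on the limit closes the contradiction.
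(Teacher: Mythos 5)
Your overall scheme---reduce to a scalar curvature bound, blow up at points of unbounded curvature, obtain a pointed Cheeger--Gromov limit using the $v_0$-non-collapsing to control injectivity radius, and derive a contradiction from the limit---matches the architecture of the paper, but there are a genuine error and a genuine gap in the two places that actually do the work.

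\textbf{The algebraic step is wrong as stated.} You claim $\sec^c \geq -1$ is \emph{equivalent} to $\R + \id \geq 0$ on $\wedge^2 T^c M$. Only one implication holds: nonnegativity of the (shifted) curvature operator implies nonnegativity of (shifted) complex sectional curvature, and the converse fails already for sectional versus operator curvature. Consequently $|A| \leq \tr A$ cannot be invoked on $A = \R + \id$, and the inequality $|\riem| \leq C_n(1+\scal)$ does not follow by that route. The inequality itself is true, but via the paper's Lemma~\ref{lem:4}: $\sec^c \geq -1$ gives a sectional curvature lower bound (Lemma~\ref{lem:3}), and any lower sectional bound combined with a pointwise scalar upper bound controls all sectional curvatures two-sidedly, hence controls the operator. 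That argument bypasses any claim about positivity of $\R + \id$.

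\textbf{The contradiction does not close.} The paper's point-picking deliberately selects $(\bar p_k, \bar t_k)$ with $\scal_{\bar t_k}(\bar p_k) \geq 4^k / t_k$, which forces $Q_k \bar t_k \to \infty$: the limit is always ancient, and the rescaled $t=0$ slice---the only slice on which you know $\scal \leq s_0$---escapes to time $-\infty$ and plays no role in the limit. Thus your ``$\scal(\tilde g_k)(\cdot,-Q_k t_k)\leq s_0/Q_k$ passes to the limit'' step has nothing to pass to. In the genuinely ancient case you fall back on the trace Harnack $\partial_s \scal \geq 0$, but this gives monotonicity, i.e.\ $\scal(x,0) \geq \scal(x,s)$ for $s<0$; knowing (even if you could show it) that $\scal$ tends to $0$ as $s\to-\infty$ does not force $\scal(\cdot,0)=0$ and hence does not contradict $\scal_{g_\infty}(p_\infty,0)=1$. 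If instead the sequence happened to be non-ancient (which an unstructured Hamilton-style maximum-picking does not rule out), the rescaled initial slice sits at the open boundary of the convergence interval, and Shi-type interior estimates that drive Hamilton compactness do not give $C^\infty$ convergence up to that slice, so again you cannot extract ``$\scal_{g_\infty}=0$ on the initial slice.'' The paper's contradiction is of a different nature and is what makes the argument work: the ancient limit has bounded curvature and nonnegative complex sectional curvature, so Lemma~4.5 of Cabezas-Rivas--Wilking forces the asymptotic volume ratio $\vol(B(\cdot,r))/r^n \to 0$, contradicting the positive ratio inherited from equation~\eqref{eq:6}.

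\textbf{A structural remark.} You try to rule out \emph{any} curvature blow-up directly, whereas the paper first proves the weaker estimate $\scal_t \leq C/t$ (Lemma~\ref{lem:2}) and only then upgrades to a uniform bound by Theorem~\ref{thm:5} (Chen/Simon), using the initial curvature bound from Lemma~\ref{lem:4} for the $t=0$ hypothesis of that theorem. This two-step structure is not cosmetic: it is what makes the blow-up analysis uniform in $t_k$, since the $C/t$ normalization together with the $4^k$ factor is exactly what guarantees the rescaled flows live on time intervals of length $\to\infty$ and converge to an ancient solution. Without it, the $t_k \to 0$ regime is not controlled.
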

\begin{rmk}
  \label{rmk:5}
  Readers may wonder if the uniqueness is true on
  $[0,T]$. Unfortunately, our method is not strong enough to prove it. Actually,
  Cabezas-Rivas and Wilking had constructed an immortal complete
  ricci flow solution with positive curvature operator (hence with
  positive complex 
  sectional curvature) which is bounded if and only if $t\in
  [0,1)$ (See \cite{2011arXiv1107.0606C} Theorem 4 b). And by a
  Toponogov's theorem below (Theorem \ref{thm:2}), it has
  a uniform lower bound for volume of any unit ball in 
  initial manifold. So it does satisfy assumptions of
  Theorem \ref{thm:1}. However, our method to prove uniqueness is
  to show that curvature 
  is bounded, which is not true general for the whole interval of
  $[0,T]$ as shown by above example. Nonetheless, the question is still open.
\end{rmk}
\begin{rmk}
  \label{rmk:6}
  Authors are in debt to Cabezas-Rivas and 
  Wilking \cite{2011arXiv1107.0606C} for
  following their use of complex 
  sectional curvature and adapting
  many of their arguments to serve their own purpose. 
\end{rmk}

\section{Preliminaries}
\label{sec:preliminaries}

In this section, we list a few facts for later use. 

Along ricci flow, the volume form satisfies the following evolution equation.
\begin{equation*}
\pd{t}\dv = -\scal\dv  
\end{equation*}
We also write down the rescaling relations for later use. If  $\gtil=k^2g$,
$\widetilde{\sec}^c=k^{-2}\sec^c$.

Remember that  sectional curvature of a tangent plane $\sigma$ spanned by
$u,v\in T_pM$ is defined by
\begin{equation*}
  \sec(\sigma)=\frac{\riem(u,v,v,u)}{\abs{u}^2\abs{v}^2-g(u,v)^2}.
\end{equation*}
We have the following relations between complex sectional curvature and
other curvatures.
\begin{lemma}
  \label{lem:3}
  If $\sec^c>(\geq)C$ for any constant $C$, then sectional
  curvature $\sec >(\geq)C$.
\end{lemma}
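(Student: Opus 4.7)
The claim is essentially immediate from the definitions, and the plan is simply to specialize the complex sectional curvature to complex planes spanned by real vectors.

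Concretely, given any real tangent $2$-plane $\sigma_{\mathbb{R}} \subset T_pM$, I would pick a real orthonormal basis $u,v$ of $\sigma_{\mathbb{R}}$, so that the sectional curvature becomes
\begin{equation*}
  \sec(\sigma_{\mathbb{R}}) = \riem(u,v,v,u).
\end{equation*}
I then regard $u,v$ as elements of $T_p^cM$. Because they are real, $\bar{u}=u$ and $\bar{v}=v$, hence the Hermitian relations $g(u,\bar{u}) = g(u,u) = 1$, $g(v,\bar{v}) = g(v,v)=1$, and $g(u,\bar v)=g(u,v)=0$ hold automatically, so $u,v$ are unitary orthogonal in the sense of Definition \ref{def:1}. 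They are also $\mathbb{C}$-linearly independent (if $au+bv=0$ with $a,b\in\mathbb{C}$, separating real and imaginary parts and using real linear independence forces $a=b=0$), so they span a genuine complex $2$-plane $\sigma \subset T_p^cM$.

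Applying the definition of complex sectional curvature to this $\sigma$ gives
\begin{equation*}
  \sec^c(\sigma) = \riem(u,v,\bar v,\bar u) = \riem(u,v,v,u) = \sec(\sigma_{\mathbb{R}}),
\end{equation*}
so the hypothesis $\sec^c(\sigma) > C$ (resp.\ $\geq C$) translates directly into $\sec(\sigma_{\mathbb{R}}) > C$ (resp.\ $\geq C$). Since $\sigma_{\mathbb{R}}$ was arbitrary, this is the desired conclusion.

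There is really no obstacle here: the argument is a one-line specialization, and the only point worth writing down carefully is the verification that a real orthonormal pair is automatically unitary orthogonal in the Hermitian sense and spans a true complex $2$-plane, so that it is a legitimate test object in Definition \ref{def:1}. No curvature identities or symmetries beyond the definition are required.
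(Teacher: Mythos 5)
Your proof is correct and takes essentially the same approach as the paper: specialize the complex sectional curvature to the complexification of a real $2$-plane spanned by a real orthonormal pair, for which $\bar{u}=u$, $\bar{v}=v$, so $\sec^c(\sigma)=\riem(u,v,v,u)=\sec(\sigma_{\mathbb{R}})$. The only difference is that you spell out the verification that a real orthonormal pair is unitary orthogonal and $\mathbb{C}$-independent, which the paper leaves implicit.
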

\begin{proof}
  Let $u,v\in T_pM\subset T_p^c$ be any pair of orthogonal
  unit vectors and  
  $\sigma$ be the complex plane spanned by them. Then sectional
  curvature on the real plane spanned by them is
  $\sec(u,v)=\riem(u,v,v,u)=\sec^c(\sigma)>C$. 
\end{proof}
\begin{lemma}
  \label{lem:4}
  If sectional curvature is bounded from below and scalar curvature is
  bounded from above, then the sectional curvature is
  bounded from both sides, as is the curvature operator.
\end{lemma}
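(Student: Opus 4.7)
The plan is to split the argument into two steps: first upgrade the lower bound on sectional curvature to a two-sided bound using the scalar curvature upper bound, then upgrade that two-sided bound on $\sec$ to a bound on the full curvature operator via polarization.

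Let $K_0 \geq 0$ and $s_0$ be constants with $\sec \geq -K_0$ and $\scal \leq s_0$. For the first step, I would fix $p \in M$ and any $2$-plane $\sigma \subset T_pM$, take an orthonormal basis $\{e_1, e_2\}$ of $\sigma$, and complete it to an orthonormal basis $\{e_1, \dots, e_n\}$ of $T_pM$. The trace expansion
\begin{equation*}
\scal(p) = 2 \sum_{i < j} \sec(e_i \wedge e_j)
\end{equation*}
isolates the term $\sec(\sigma) = \sec(e_1 \wedge e_2)$; bounding each of the remaining $\binom{n}{2} - 1$ sectional curvatures below by $-K_0$ and rearranging then yields
\begin{equation*}
\sec(\sigma) \leq \tfrac{1}{2} s_0 + \Bigl(\binom{n}{2} - 1\Bigr) K_0.
\end{equation*}
Since $p$ and $\sigma$ are arbitrary, this combined with the assumed lower bound gives a uniform two-sided bound on sectional curvature.

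For the second step, I would invoke the standard polarization identity for tensors satisfying the algebraic symmetries of Riemann, which expresses any component $R_{ijkl}$ in an orthonormal frame as a fixed linear combination of expressions of the form $\riem(X, Y, Y, X)$ with $X, Y$ of the shape $e_a \pm e_b$. Since $|X|, |Y| \leq \sqrt{2}$, each such term is controlled by the two-sided sectional curvature bound times $|X|^2 |Y|^2 - g(X,Y)^2$, so every $|R_{ijkl}|$ is bounded by a dimensional constant times the uniform sectional curvature bound produced in step one. As the entries of $\R$ in the orthonormal basis $\{e_i \wedge e_j\}_{i < j}$ of $\wedge^2 T_pM$ coincide up to sign with these components, a uniform bound on the operator norm of $\R$ follows.

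Neither step presents a serious obstacle: the first is a one-line calculation from the trace expansion of scalar curvature, and the second is a purely algebraic polarization fact about curvature-type tensors, independent of the global riemannian geometry of $M$. The only bookkeeping is tracking that the constants produced depend solely on $n$, $K_0$, and $s_0$.
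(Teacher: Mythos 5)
Your proposal is correct and follows essentially the same two-step argument as the paper: bound $\sec$ from above by isolating one term in the trace expansion $\scal = 2\sum_{i<j}\sec(P_{ij})$, then control all components of $\R$ via polarization into sectional-curvature terms (the paper outsources this last step to Cheeger--Ebin rather than spelling out the $e_a \pm e_b$ combinations, but the content is identical).
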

\begin{proof}
  Fix a point $p\in M$.
  Suppose for any plane $P\in T_pM$, the sectional curvature $\sec(P)\geq
  c$; and the scalar curvature  $\scal(p)\leq C$  for some fixed  
  constants $c\leq C$. 
  
  Then for any o.n. basis $\{e_i\}$ of  $T_pM$,
  \begin{equation*}
    C>\scal=\sum_{i,j}\riem(e_i,e_j,e_j,e_i)=2\sum_{i<j}\riem(e_i,e_j,e_j,e_i)=
    2\sum_{i<j}\sec(P_{ij})
  \end{equation*}
  where $P_{ij}$ is the plane spanned by $\{e_i,e_j\}$. Because
  $\sec$ is bounded from below, by above inequality, $\sec(P_{ij})$ is
  bounded from above by a constant depending on only $c$, $C$, and
  dimension. The above orthonormal basis is 
  arbitrary, so sectional curvature at $p$ is bounded from above. 

  Any component of curvature operator is a linear combination of
  sectional curvature for various planes in tangent space where the
  coefficients depend only on dimension (see  e.g. Cheeger and Ebin
  \cite{MR2394158} page 14). Thus curvature operator is bounded.  
\end{proof}

\section{Proof of Theorem \ref{thm:3}}
\label{sec:proof-theorem}

\begin{proof}[Proof of theorem~\ref{thm:3}]
  \label{pf:1}
  The key to the proof is establishing the following  estimate
  \begin{equation*}
    \scal_t\leq \frac C t.
  \end{equation*}
  Then, using Theorem~\ref{thm:5} in B.L.Chen \cite{MR2520796} (see
  also Simon \cite{MR2439551}), we
  get a uniform bound for $|\riem|$. 
  The arguments here are adaptations of those in Cabezas-Rivas and Wilking
  \cite{2011arXiv1107.0606C}.
  
  \subsection{Estimate Volume Lower Bound}
  \label{sec:estim-volume-lower}
  
  We first establish uniform volume lower bound for unit balls along
  ricci flow. 
  \begin{lemma}
    \label{lem:1}
    Let $(M,g)$ be a complete riemannian $n$-manifold whose unit balls have
    volume lower bound $v_0>0$.  Let $(M,g(t))$ be a solution of 
    ricci flow on $t\in [0,T]$ with $g(0)=g$,  and  its
    sectional curvature is 
    bounded from below by $-1$ for any $t\in [0,T]$, then there exists
    a constant
    $C_n > 0$ depending only on dimension $n$, s.t. The volume of any ball
    $B_t(p,\ee)\subset
    (M,g(t))$  bounded from below by $v_0/2$ for $t\in [0,\epsilon]$, 
    where $\epsilon=\min\{v_0/2C_n,1/(n-1)\}$.  
  \end{lemma}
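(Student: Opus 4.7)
The plan is to transport the initial volume lower bound forward in time by combining a distance comparison coming from the Ricci lower bound with a Bishop-type volume estimate applied at time $t$. From $\sec^c \geq -1$ and Lemma~\ref{lem:3} we get $\sec \geq -1$, hence $\ric_t \geq -(n-1)g_t$ for every $t\in[0,T]$. Together with $\partial_t g_t = -2\ric_t$ this yields $\partial_t g_t \leq 2(n-1)g_t$, and integration gives $g(t)\leq \ee^{2(n-1)t}g(0)$, so $d_{g(t)}(x,y) \leq \ee^{(n-1)t}d_{g(0)}(x,y)$. For $t \leq 1/(n-1)$ we have $\ee^{(n-1)t}\leq \ee$, and thus the ball inclusion $B_{g(t)}(p,\ee) \supset B_{g(0)}(p,1)$ holds; it therefore suffices to bound $\vol_{g(t)}(B_{g(0)}(p,1))$ from below.

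Put $\varphi(t) := \vol_{g(t)}(B_{g(0)}(p,1))$, so $\varphi(0) \geq v_0$ and, by $\partial_t dv = -\scal\, dv$, we have $\varphi'(t) = -\int_{B_{g(0)}(p,1)}\scal_t\, dv_t$. The integrand cannot be controlled pointwise from above — scalar curvature may blow up along the flow (cf.\ Remark~\ref{rmk:5}) — so the rate at which $\varphi$ decreases must be estimated by an integral bound. Applying Bishop's comparison theorem on $(M,g(t))$ with $\ric_t\geq -(n-1)g_t$, every time-$t$ geodesic ball satisfies $\vol_{g(t)}(B_{g(t)}(q,r)) \leq V_{-1}(r)$, the volume of the corresponding ball in the hyperbolic model. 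Combined with the distance comparison this gives $\varphi(t)\leq \vol_{g(t)}(B_{g(t)}(p,\ee)) \leq V_{-1}(\ee)$, a purely dimensional quantity.

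Adapting the argument of Cabezas-Rivas and Wilking~\cite{2011arXiv1107.0606C}, one uses this uniform Bishop upper bound on $\varphi(t)$ together with the Ricci flow equation to deduce the rate estimate $\varphi'(t) \geq -C_n$ for a dimensional constant $C_n$; in effect, the missing pointwise upper bound on $\scal_t$ is replaced by an averaged one extracted from the time-$t$ volume estimate. Once this is in hand, integration gives $\varphi(t)\geq v_0 - C_n t$, and choosing $\epsilon = \min\{v_0/(2C_n),\,1/(n-1)\}$ yields $\varphi(t)\geq v_0/2$ on $[0,\epsilon]$; together with the ball inclusion from the first paragraph, this proves $\vol_{g(t)}(B_{g(t)}(p,\ee))\geq v_0/2$ on the required interval.

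The main obstacle is exactly the rate estimate $\varphi'(t) \geq -C_n$. Since $\scal_t$ is only bounded below ($\scal_t \geq -n(n-1)$), the elementary Gronwall argument produces an \emph{upper} bound $\varphi(t)\leq \ee^{n(n-1)t}\varphi(0)$ rather than the desired lower one. The trick is therefore to exploit the Bishop upper bound on $\varphi(t)$ itself, together with the Ricci flow equation, to control $\int_{B_{g(0)}(p,1)}\scal_t\, dv_t$ in an averaged sense — the same mechanism that underlies the non-collapsing part of~\cite{2011arXiv1107.0606C}.
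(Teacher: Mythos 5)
Your setup matches the paper exactly: the distance comparison $d_{g(t)} \leq \ee^{(n-1)t} d_{g(0)}$, the ball inclusion $B_{g(0)}(p,1)\subset B_{g(t)}(p,\ee)$ for $t\leq 1/(n-1)$, and the reduction to bounding $\varphi(t) = \vol_{g(t)}(B_{g(0)}(p,1))$ from below via $\varphi'(t) = -\int_{B_{g(0)}(p,1)}\scal_t\,\dv_t$. You also correctly identify that the whole proof hinges on the rate estimate $\varphi'(t)\geq -C_n$.

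But the step you propose to close that gap does not work, and you essentially admit as much. A uniform upper bound $\varphi(t)\leq V_{-1}(\ee)$ says nothing about $\varphi'(t)$: a bounded function can decrease arbitrarily fast on a short interval. No amount of combining the Bishop bound ``with the Ricci flow equation'' turns a pointwise bound on $\varphi$ into a lower bound on $\varphi'$ — this is precisely the information that is missing, and you must get it from somewhere else. What the paper uses is \textbf{Petrunin's integral curvature estimate}: for any complete manifold with $\sec\geq -1$, $\int_{B(p,1)}\scal \leq C(n)$. This is a genuine theorem about lower sectional curvature bounds (it has no analogue under mere Ricci lower bounds, and it is certainly not a corollary of Bishop--Gromov). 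Applied at each fixed time $t$, it bounds $-\int_{B_t(p,\ee)}\scal_t$ from below; the remaining piece $\int_{B_t(p,\ee)\setminus B_{g(0)}(p,1)}\scal_t$ is then handled using the crude pointwise bound $\scal_t \geq -n(n-1)$ and Bishop--Gromov for $\vol_t(B_t(p,\ee))$, giving $\varphi'(t)\geq -C'(n)$. So the architecture of your proof is right and the final integration is right, but the key lemma you are ``adapting'' from Cabezas--Rivas and Wilking is actually Petrunin's theorem, and without naming and invoking it the central estimate is unsubstantiated.
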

  \begin{proof}[Proof of Lemma \ref{lem:1}]
    \label{pf:3}
    Because sectional curvature is bounded uniformly from below by $-1$,
    $\ric_t \geq -(n-1)g(t)$, we have the following estimate for
    distance function.
    \begin{lemma}
      \label{lem:5}
      For any fixed points $p,q\in M$, 
      \begin{equation}
        \label{eq:2}
        \dist_t(p,q)\leq \dist_0(p,q)\,\e{(n-1)t}.
      \end{equation}
    \end{lemma}
    \begin{proof}
      \label{pf:5}
      Let $\gamma:[0,1]\to M$ be a length minimizing geodesic with
      respect to $g(0)$ such that $\gamma(0)=p$ and $\gamma(1)=q$. Let
      $\abs{\gamma}_t$ be the length of the curve $\gamma$ with
      respect to $g(t)$. Then
      \begin{equation*}
        \begin{split}
          \td{t}\abs{\gamma}_t
          &=\int_0^1\td{t}\sqrt{g_t(\dot\gamma(s),\dot\gamma(s))}\dd s
          =\int_0^1
          \frac{-\ric(\dot\gamma(s),\dot\gamma(s))}
          {\sqrt{g_t(\dot\gamma(s),\dot\gamma(s))}}\dd s\\
          &\leq\int_0^1 (n-1)\norm{\dot\gamma(s)}_t\dd s=(n-1)\abs{\gamma}_t.
        \end{split}
      \end{equation*}
      By integrating above differential inequality, we get
      \begin{equation*}
        \dist_t(p,q)\leq\abs{\gamma}_t\leq \abs{\gamma}_0\e{(n-1)t}
        =\dist_0(p,q)\e{(n-1)t}.
      \end{equation*}
    \end{proof}
    Therefore $B_0(p,1)\subset B_t(p,\ee)$ for any $0\leq t\leq 1/(n-1)$ and
    any $p\in M$. Then,
    \begin{equation*}
      \begin{split}
        \td t \vol_t(B_0(p,1))&= -\int_{B_0(p,1)}\scal_t
        \dv_t\\
        &= -\int_{B_t(p,\ee)}\scal_t\dv_t +
        \int_{B_t(p,\ee)-B_0(p,1)}\scal_t\dv_t\\
        &\geq -C(n) + \int_{B_t(p,\ee)}-n(n-1)\dv_t.\\
      \end{split}
    \end{equation*}
    In last step we use $\scal_t\geq -n(n-1)$ and Petrunin's estimate as below.
    \begin{thm}[Petrunin\cite{MR2423998}]
      \label{thm:101}
      Let $M$ be a complete riemannian manifold whose sectional
      curvature is at least $-1$. Then
      \begin{equation*}
        \int_{B(p,1)}\scal \leq C(n)
      \end{equation*}
      for any $p\in M$, where $B(p,1)$ is the unit ball centered  in
      $p$ and $C(n)$ is a constant depending only on dimension $n$.
    \end{thm}
    Then, applying Bishop-Gromov volume comparison,
    \begin{equation*}
        \td t \vol_t(B_0(p,1))
        \geq -C(n) - n(n-1)\vbh \ee
        =:-C'(n),
    \end{equation*}
    where $\vbh r$ is the volume of a ball with radius $r$ in
    simply connected hyperbolic space of constant curvature $-1$.
    Integrating above,
    \begin{equation*}
      \vol_{t}(B_{t}(p,\ee))\geq 
      \vol_{t}(B_0(p,1))\geq \vol_0(B_0(p,1)) - C'(n) t\geq v_0 -C‘(n) t.
    \end{equation*}
    Hence, let $\epsilon=\min\{v_0/2C'(n),1/(n-1)\}$, 
    \begin{equation*}
      \vol_{t}(B_{t}(p,\ee)\geq v_0/2\ \text{for any}\ t\in
      [0,\epsilon]. 
    \end{equation*}
  \end{proof}

  \subsection{Curvature Uniform Bound}
  \label{sec:curv-unif-bound}
  
  \begin{lemma}
    \label{lem:2}
    Let $(M^n,g(t))$ be a complete solution of ricci flow
    for $M\times [0,\epsilon]$ with $\sec^c\geq -1$. If there
    is a constant $v'>0$ such that
    volume of balls $\vol_t(B_t(p,\ee)) > v'$, for any $(t,p)\in
    [0,\epsilon]\times M$,  then there exists a
    constant $C>0$ ,  so that
    \begin{equation*}
      \scal_t(p) \leq \frac {C} t
    \end{equation*}
    for any $(p,t)\in M\times (0,\epsilon]$.
  \end{lemma}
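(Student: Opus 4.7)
The plan is a Perelman--Hamilton style blow-up by contradiction, following the template of Cabezas-Rivas and Wilking \cite{2011arXiv1107.0606C}. Assume $\sup_{M\times(0,\epsilon]} t\,\scal_t = \infty$. A parabolic point-picking argument (doubling-time / Hamilton's trick) produces a sequence $(p_k,t_k)\in M\times(0,\epsilon]$ with $Q_k:=\scal_{t_k}(p_k)\to\infty$, $t_kQ_k\to\infty$, and such that $\scal_t(p)\le 2Q_k$ holds on a parabolic neighborhood around $(p_k,t_k)$ that becomes arbitrarily large after parabolic rescaling by $Q_k$.

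Rescale by $\tilde g_k(s) := Q_k\, g(t_k + s/Q_k)$. These flows are defined on intervals $[-t_kQ_k,\,Q_k(\epsilon-t_k)]$, whose backward endpoints tend to $-\infty$; they satisfy $\tilde{\sec}^c \ge -Q_k^{-1}\to 0$, $\tilde{\scal}\le 2$ on the enlarging neighborhood, and $\tilde{\scal}(p_k,0)=1$. Lemma~\ref{lem:4} (sectional lower bound plus scalar upper bound implies two-sided curvature bound) then provides a uniform bound on $|\tilde{\riem}|$ on this region, and Shi's local derivative estimates yield uniform bounds on all covariant derivatives. The hypothesis $\vol_t(B_t(p,1))\ge v'$, together with Bishop--Gromov applied with sectional lower bound $-1$ to $g_{t_k}$, gives $\vol_{g_{t_k}}(B(p,r))\gtrsim r^n$ for $r\le 1$; under the parabolic rescaling, this translates into a uniform positive volume lower bound for unit $\tilde g_k(0)$-balls, so Cheeger--Gromov--Taylor supplies a uniform injectivity radius lower bound. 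By Hamilton's compactness theorem, a subsequence converges smoothly in the Cheeger--Gromov sense to a complete ancient Ricci flow $(\tilde M_\infty,\tilde g_\infty(s), p_\infty)$ on $(-\infty,0]$, with $\sec^c\ge 0$, bounded curvature, uniformly noncollapsed unit balls, and $\tilde{\scal}(p_\infty,0)=1$.

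The last step is to rule out such a limit, and this is the main obstacle. The plan is to use Brendle's Harnack inequality (valid for flows with $\sec^c\ge 0$) to conclude $\partial_s\tilde{\scal}\ge 0$, so $\tilde{\scal}\le 1$ on all of $\tilde M_\infty\times(-\infty,0]$, and then invoke a dimension-reduction / asymptotic-cone analysis in the spirit of Cabezas-Rivas--Wilking to force the asymptotic volume ratio of $\tilde g_\infty(0)$ to vanish, which contradicts the uniform noncollapsing inherited by the limit. This yields the desired estimate $\scal_t\le C/t$, and Lemma~\ref{lem:4} together with Chen's Theorem~\ref{thm:5} cited in the proof of Theorem~\ref{thm:3} then promote this scalar bound to a uniform bound on $|\riem_t|$. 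The remaining ingredients (point-picking, the curvature bound from Lemma~\ref{lem:4}, Shi's estimates, Bishop--Gromov, Hamilton's compactness) are essentially routine once this rigidity step is in place; adapting the Harnack and rigidity machinery of \cite{2011arXiv1107.0606C} to the $\sec^c\ge -1$ setting (rather than $\sec^c\ge 0$) is the genuine technical content.
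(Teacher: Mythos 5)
Your proposal follows the paper's argument closely: Perelman-style point-picking, parabolic rescaling by $Q_k$, Bishop--Gromov plus Cheeger--Gromov--Taylor for an injectivity radius lower bound, Hamilton's compactness, and a contradiction via the Cabezas-Rivas--Wilking asymptotic-volume-ratio rigidity for the resulting ancient limit. You do, however, mis-locate the technical difficulty. After rescaling, the complex sectional curvature lower bound becomes $-Q_k^{-1}\to 0$, so the limit ancient solution has $\sec^c\ge 0$ \emph{exactly}; there is nothing to adapt from the $\sec^c\ge 0$ setting to $\sec^c\ge -1$, because the rescaling has already performed that reduction. Once one checks that the limit is noncompact with infinite diameter at every time (a short argument using Lemma~\ref{lem:5} to control $\dist_{g_\infty(s)}(p_\infty,y_i)$ for a sequence $y_i$ with $\dist_{g_\infty(0)}(p_\infty,y_i)=i$), Lemma~4.5 of \cite{2011arXiv1107.0606C} applies verbatim to force the asymptotic volume ratio of $g_\infty(0)$ to vanish; the paper makes no separate appeal to Brendle's Harnack inequality. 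In short, the rigidity step you flag as the ``main obstacle'' is in fact the routine citation, and the genuine work lies in the blow-up construction and the volume/injectivity-radius bookkeeping needed to extract the limit.
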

  \begin{proof}
    \label{pf:2}
    Prove by contradiction. Suppose there is a
    sequence  ${(p_k,t_k)}\subset M\times (0,\epsilon]$  such that, 
    \begin{equation}
      \label{eq:7}
      \scal_k(p_k) > \frac{4^k}{t_k}.
    \end{equation}
    where and hereafter, $B_k=B_{g(t_k)}$, $B_t=B_{g(t)}$, $\scal_t=\scal_{g(t)}$,
    $\scal_k=\scal_{g(t_k)}$, $\dist_t=\dist_{g(t)}$, and  $\dist_k=\dist_{g(t_k)}$.
    We can pick points in space-time to blow up by following trick first
    introduced by Perelman in \cite{2002math.....11159P}.
    \begin{claim}
      \label{claim:2}
      For any sufficiently large integer $k$, there exists
      $\pbar_k\in M$ and  $\tbar_k\in (0,t_k]$  satisfy
      the following equations
      \begin{equation}
        \label{eq:8}
        \scal_t(x) \leq 2 \scal_{\tbar_k}(\pbar_k)\quad\text{for all}\ 
        \begin{cases}
          x \in B_{\tbar_k}(\pbar_k,
          \frac{k}{\sqrt{\scal_{\tbar_k}(\pbar_k)}})\ \text{and,}\\ 
          t \in (\tbar_k-\frac{k}{\scal_{\tbar_k}(\pbar_k)},\tbar_k]\\
        \end{cases}
      \end{equation}
      \begin{equation}
        \label{eq:1}
        \scal_{\tbar_k}(\pbar_k) \geq \frac{4^k}{t_k}.
      \end{equation}
    \end{claim}
    \begin{proof}[Proof of Claim~\ref{claim:2}]
      \label{pf:4}
      We start searching for $(\pbar_k,\tbar_k)$ from $(p_k,t_k)$
      --- if it satisfies equation \eqref{eq:8},  we
      are done. Otherwise we can find $x_1$ and $\tau_1$ 
       s.t.
       \begin{equation*}
         \dist_k(x_1,p_k)\leq \frac{k}{\sqrt{\scal_k(p_k)}},\  
         \tau_1 \in (t_k-\frac{k}{\scal_k(p_k)},t_k],\ \textrm{and }
         \scal_{\tau_1}(x_1) \geq 2 \scal_k(p_k).
      \end{equation*}
      If $(x_1,\tau_1)$ is not what we are looking for, we can find
      $x_2$ and $\tau_2$, s.t.
      \begin{equation*}
        \dist_{\tau_1}(x_1,x_2)\leq\frac k{\sqrt{\scal_{\tau_1}(x_1)}},\  
        \tau_2\in (\tau_1-\frac k{\scal_{\tau_1}(x_1)},\tau_1], 
        \ \text{and } \scal_{\tau_2}(x_2) \geq 2 \scal_{\tau_1}(x_1).
      \end{equation*}
      We claim that, along this way only for finite steps, we can find
       desired 
      $(\tbar_k,\pbar_k)$.  Otherwise, we get a sequence of $(x_k,\tau_k)$ and
      $\scal_{\tau_k}(x_k)\to \infty$ as  $k\to\infty$. Here readers may
      worry that $\tau_i$ would go below $0$ thus ill-defined. However, by our
      construction, not only this cannot happen, but also, the above
      sequence is well bounded in both space and time, as
      shown in the following discussion. For the convenience, let
      $\tau_0:=t_k$ and $x_0:=p_k$.
      
      Firstly,
      \begin{equation*}
        \begin{split}
        t_k\geq \tau_{i+1}&\geq
        t_k - \sum_{l=0}^i\frac k {\scal_{\tau_l}(x_l)}\\
        &\geq t_k-t_k\sum_{l=0}^i\frac k {2^l\scal_k(p_k)} \geq
        t_k\left (1- \frac{2k}{\scal_k(p_k)}\right )
        =: \epsilon_k >0.
        \end{split}
      \end{equation*}
      for sufficiently large $k$, where $\epsilon_k$ is a constant
      independent of $i$.  

      Secondly, $x_i$ is also
      bounded with respect to a background metric $g_k$. Based on assumption,
      ricci curvature is bounded from below by $-(n-1)$,
      and by definition  $\tau_i < t_k$, then by
      equation~\eqref{eq:2}, for any integer $l\geq 1$
      $\dist_k(x_{l-1},x_l)\leq
      \e{(n-1)t_k}\dist_{\tau_{l-1}}(x_{l-1},x_l)$. 
      Thus
      \begin{equation*}
        \begin{split}
          \dist_k(p_k,x_i)&\leq
          \e{(n-1)t_k}\sum_{l=1}^i\dist_{\tau_{l-1}}(x_{l-1},x_l)\\
          &\leq \e{(n-1)t_k} \sum_{l=1}^i\frac k
          {\sqrt{\scal_{\tau_{l-1}}(x_{l-1})}} \\
          &\leq \e{(n-1)t_k} \sum_{l=1}^\infty\frac k
          {(\sqrt{2})^{l-1}\sqrt{\scal_k(p_k)}}\\
          &=:C_k \leq \infty,
        \end{split}
      \end{equation*}
      where $C_k$ is a constant independent of $i$. 

      Therefore, $(x_i,\tau_i)$ subconverges to a limit, say
      $(x_\infty,\tau_\infty)$. Then, by continuity of scalar curvature,  
      $\scal_{\tau_\infty}(x_\infty)= \infty$, which is absurd.
    \end{proof}
    
    Now we consider the volume ratios. 
    By Bishop-Gromov relative volume comparison theorem, for any
    $0< r\leq \ee $,
    \begin{equation*}
      \frac {\vol_{\tbar_k}(\pbar_k,r)}{r^n}
      =\frac {\vol_{\tbar_k}(\pbar_k,r)}{\vbh r}\frac{\vbh r}{r^n}
      \geq \frac {\vol_{\tbar_k}(\pbar_k,\ee)}{\vbh{\ee}}\frac{\vbh r}{r^n}
      \geq \frac{v'}{\vbh{\ee}}\frac{\vbh r}{r^n}.
    \end{equation*}
    Using relative volume comparison theorem again for euclidean and
    hyperbolic space, for any $0<r$,
    \begin{equation*}
      c(n) \geq \frac{r^n}{\vbh r}
    \end{equation*}
    where $c(n)$ is a constant depending only on dimension $n$. So 
    \begin{equation*}
      \frac {\vol_{\tbar_k}(\pbar_k,r)}{r^n}      
      \geq \frac{v'}{\vbh e}\frac 1 {c(n)}=:v''>0
    \end{equation*}
    where $v''=v''(v_0,n)$ is a constant depending only on $v_0$ and
    $n$. 

    Let $Q_k=\scal_{\tbar_k}(\pbar_k)$.
    We rescale metric $g_{\tbar_k}$ on
    $B_{\tbar_k}(\pbar_k,k/\sqrt{Q_k})$ for $k$ large. In this case,
    $k/\sqrt{Q_k} < e$, so above 
    lower bound for volume ratio is true for $B_{\tbar_k}(\pbar_k,r)$ with
    any $r<k/\sqrt{Q_k}$.   We define a new metric by 
    parabolic rescaling
    \begin{equation*}
      \gtil_k(x,s):=Q_kg(x,\tbar_k + Q_k^{-1}s).
    \end{equation*}
    Hereafter $\widetilde{\scal}_k$ is the scalar curvature of
    $\gtil_k$, so are the other $\tilde{\phantom{g}}$ed quantities. 
    Then by definition, $\gtil_k$ is a solution to ricci flow on
    $(-k,0]\times B_{\gtil_k(0)}(\pbar_k,k)$. 
    Note by equation~\eqref{eq:8}, scalar curvature of $\gtil_k$ is
    bounded from above by 2. And the complex sectional curvature
    \begin{equation}\label{eq:9}
      \widetilde{\sec}^c\geq -\frac 1{Q_k} > -1.
    \end{equation}
    Consequently, by Lemma \ref{lem:3} and  \ref{lem:4}, for some
    constant $c(n)>0$ depending only on dimension,
    \begin{equation}\label{eq:5}
      c(n)|\widetilde{\riem}|\leq \widetilde{\scal}\leq 2.
    \end{equation}
    Because volume ratio is scaling-invariant,
    \begin{equation}\label{eq:6}
      \frac{\vol_{\gtil_k(0)}(B_{\gtil_k(0)}(\pbar_k,r))}{r^n}\geq v'' > 0
    \end{equation}
    for any  $0< r\leq k$. Therefore, combining
    equation \eqref{eq:5} and \eqref{eq:6} with injectivity radius
    estimate of Cheeger-Gromov-Taylor\cite{MR658471} (see also
    Cabezas-Rivas and Wilking \cite{2011arXiv1107.0606C} Theorem C.3), we have
    \begin{equation*}
      \inj_{\gtil(0)}(\pbar_k) \geq c(v'',n) >0.
    \end{equation*}
    Together with equation \eqref{eq:5},  applying Hamilton's compactness
    theorem (see  
    \cite{MR1333936} also \cite{MR2302600}), the sequence
    of pointed ricci flow solution
    $(B_{\gtil_k}(\pbar_k,k),\gtil_k(s),\pbar_k)$ subconverge
    to a complete ancient solution 
    $(M_\infty,g_\infty(s),p_\infty)$ for $s\in (-\infty,0]$ in the
    pointed Cheeger-Gromov sense. From the convergence and
    $\widetilde{\scal}_k(\pbar_k)=1$, 
    $\scal_{g_\infty(0)}(p_\infty)=1$, hence the
    limit metric is non-flat.  For the
    same reason, its riemannian curvature is
    bounded from equation~\eqref{eq:5} and its complex sectional
    curvature is nonnegative from equation
    \eqref{eq:9}. It is also noncompact for every $s\in
    (-\infty,0]$. We first observe the diameter of $g_\infty(0)$ 
    is infinity. And we pick a sequence of points $y_i\in M_\infty$
    for $i=1,2,...$,
    s.t. $\dist_{g_\infty(0)}(p_\infty,y_i) = i$. Because ricci
    curvature is nonnegative, using Lemma
    \ref{lem:5} again, we show
    $\dist_{g_\infty(-s)}(p_\infty,y_i)\geq i \to \infty$. 
    With above observations, we can apply Lemma 4.5 in Cabezas-Rivas
    and Wilking \cite{2011arXiv1107.0606C} to this ancient solution,
    and get, for any $s\in (-\infty, 0]$,
    \begin{equation*}
      \lim_{r\to\infty} \frac {\vol_{g_\infty(s)}(B_{g_\infty}(\cdot,r))}{r^n}=0. 
    \end{equation*}
    However, this contradicts to the fact that, for any $r>0$,
    \begin{equation*}
      \frac {\vol_{g_\infty(0)}(B_{g_\infty(0)}(p_\infty,r))}{r^n}>0
    \end{equation*}
     from equation \eqref{eq:6}.
  \end{proof}
  Now we are ready to use following theorem to conclude the proof.
  \begin{thm}[B.L.Chen\cite{MR2520796} or M.Simon\cite{MR2439551}]
    \label{thm:5}
    There is a constant $C=C(n)$ with the following property. Suppose
    we have a smooth solution to the ricci flow on $M^n\times [0,T]$
    such that $B_t(x_0,r_0)$, $0\leq t\leq T$, is compactly contained
    in $M$ and
    \begin{enumerate}
    \item $|\riem|\leq r_0^{-2}$ on $B_0(x_0,r_0)$ at $t=0$;
    \item
      \begin{equation*}
        |\riem|_t(x)\leq \frac K t
      \end{equation*}
      where $K\geq 1$, $\dist_t(x_0,x)<r_0$, whenever $0\leq t\leq T$. 
    \end{enumerate}
    Then we have
    \begin{equation*}
      |\riem|_t(x)\leq \e{CK}(r_0-\dist_t(x_0,x))^{-2}
    \end{equation*}
    whenever $0\leq t \leq T$, $\dist_t(x_0,x)<r_0$.
  \end{thm}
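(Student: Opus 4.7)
The statement is a local curvature-propagation estimate of pseudolocality type. Two approaches suggest themselves: a direct maximum-principle argument on a weighted curvature quantity, or a contradiction and blow-up argument paralleling the proof of Lemma~\ref{lem:2} above. I would attempt the maximum-principle approach first, because the exponential factor \(\ee^{CK}\) in the conclusion is exactly what one expects from integrating a Hamilton-type ODE \(|\riem|' \leq c|\riem|^2\) over a time interval on which \(|\riem|\) is controlled by \(K/t\).

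The plan is to introduce the weighted quantity \(F(x,t) := |\riem|_t(x)(r_0 - \dist_t(x_0,x))^2\) on \(B_t(x_0, r_0)\times[0,T]\) and derive its evolution. Using \(\partial_t |\riem|^2 \leq \Delta|\riem|^2 + c|\riem|^3\) together with the distance-evolution formula \(\partial_t \dist_t \geq -\int \ric\) along minimizing geodesics and the Perelman--Hamilton barrier Laplacian comparison for \(\dist_t\), one expects that at an interior space-time maximum \((x_*, t_*)\) the first- and second-order conditions \(\nabla F = 0\), \(\Delta F \leq 0\), \(\partial_t F \geq 0\), combined with the hypothesis \(|\riem|_t \leq K/t\), yield a differential inequality of the form \(\partial_t F \leq (cK/t)F\). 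Integrating from \(0\) to \(t_*\), using the initial bound \(F(\cdot, 0) \leq 1\) coming from \(|\riem|_0 \leq r_0^{-2}\), produces the desired estimate \(F \leq \ee^{CK}\).

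The main obstacle is controlling the cut-locus singularities of \(\dist_t\): the distance function is only Lipschitz, so Calabi's trick -- perturbing the base point and working with a smooth upper barrier -- is required to legitimize the maximum-principle computation. A secondary obstacle is tracking constants so that the exponent is exactly \(CK\) rather than a worse function of \(K\); this depends on careful handling of the cross terms between \(|\riem|\) and gradients of \(\dist_t\) in the evolution of \(F\). Should the direct approach prove too delicate, the fallback is a blow-up argument in the style of Lemma~\ref{lem:2}: negate the conclusion, apply Perelman's iterated point-picking as in Claim~\ref{claim:2} to find a worst point \((\bar p, \bar t)\) with \(Q := |\riem|(\bar p, \bar t)\) large, parabolically rescale by \(Q\), obtain non-collapsing at \((\bar p, \bar t)\) by propagating the \(t=0\) curvature bound forward via distance distortion plus Cheeger--Gromov--Taylor, extract a limit ancient Ricci flow through Hamilton's compactness, and derive a contradiction from the fact that the rescaled initial slice becomes flat as \(Q \to \infty\) while \(|\riem|(p_\infty, 0) = 1\) persists in the limit.
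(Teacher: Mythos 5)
The paper does not prove Theorem~\ref{thm:5}; it is quoted as a known result of B.-L.~Chen \cite{MR2520796} and M.~Simon \cite{MR2439551} and used as a black box at the end of the proof of Theorem~\ref{thm:3}. There is therefore no ``paper's own proof'' to compare against, but your sketch can be assessed on its own terms.

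Your primary plan has a genuine gap at the integration step. Even granting that a maximum-principle computation produces a differential inequality of the form $\partial_t F \leq (cK/t)F$, integrating it from $t=0$ to $t_*$ does not yield $F(t_*)\leq e^{CK}$: the Gronwall factor $\exp\bigl(\int_0^{t_*}cK/s\,ds\bigr)$ diverges, and multiplying $F$ by the integrating factor $t^{-cK}$ merely relocates the blow-up to $t=0$, where the initial bound $F(\cdot,0)\leq 1$ is not strong enough to cancel it. The constant $e^{CK}$ in the conclusion does not arise this way. In the cited arguments the $K/t$ hypothesis is exploited over dyadic (or otherwise shifted) time intervals, on which $\int_{t/2}^{t}K/s\,ds = K\log 2$ is finite; this is what controls distance distortion between nearby time slices and permits the $t=0$ curvature bound on $B_0(x_0,r_0)$ to be propagated forward on a ball whose radius degrades only by a factor depending exponentially on $K$. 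The exponential constant is incurred in that bookkeeping, not from a Gronwall integral over the whole interval $[0,t_*]$. A secondary issue is that a pointwise maximum-principle inequality at a single interior space-time maximum is a constraint at that point and cannot simply be ``integrated from $0$ to $t_*$''; one would need the inequality for the running supremum $\sup_x F(x,t)$, which requires additional care.

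Your fallback (Perelman-style point-picking, parabolic rescaling, Hamilton compactness, contradiction against flatness of the rescaled initial slice) is much closer to Chen's actual argument, and it runs structurally parallel to Lemma~\ref{lem:2} of the present paper. But even in that version the dyadic distance-control step is the crux: it is what shows the chosen points remain inside $B_t(x_0,r_0)$ and what makes the rescaled initial slice flat in the limit, and it is precisely where the factor $e^{CK}$ is produced.
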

  Let $x_0$ be any point in $M$. By our assumption, the ricci flow is
  complete for any $t$ hence any 
  ball in $M$ is compactly contained. Again by assumption,
  $|\riem_0|_0$ is bounded. 
  Thus we may choose $r_0$ small enough so that condition 1 is
  satisfied. Finally, lemma \ref{lem:2} gives condition 2, i.e.
  \begin{equation*}
   c(n)|\riem|_t(x)\leq \scal_t\leq \frac C t.
  \end{equation*}
  Consequently, there exists a $C$ depending on only dimension $n$,
  such that 
  \begin{equation*}
    |\riem_t(x)|\leq C
  \end{equation*}
  whenever $0\leq t\leq \epsilon$ and $\dist_t(x_0,x)\leq
  \frac{C'}{r_0^2}$. For $x_0\in M$ is arbitrary, we have $\riem$ is
  uniformly bounded on $M\times [0,\epsilon]$.
\end{proof}

\section{Corollaries}
\label{sec:corollaries}

We conclude the paper by a few corollaries of the main theorem. 

\subsection{Solutions with  Nonnegative Curvature}
\label{sec:case-nonn-curv}

If we assume further, that $(M^n,g)$ has nonnegative complex sectional
curvature, together with upper bound for scalar curvature,
it has nonnegative sectional curvature by Lemma \ref{lem:3} and
\ref{lem:4} stated in the next section. 
Then $g$ has a positive lower bound for injectivity radius
by using Toponogov's injectivity radius estimate as follows.
\begin{thm}[Toponogov\cite{MR0262974} also Maeda\cite{MR0365411}]
  \label{thm:2}
  If $M$ is a complete noncompact riemannian manifold and for all
  tangent two-plane $\sigma$ its sectional curvature $K_\sigma$
  satisfies the inequality $0\leq K(\sigma)\leq \lambda$ then there
  exists a constant $i_0>0$ such that for all $p\in M$the injectivity
  radius $i(p)$ satisfies 
  \begin{equation*}
    i(p)\geq i_0.
  \end{equation*}
  Further, if $0< K(\sigma)\leq \lambda$ for all $\sigma$, then for all
  $p\in M$, 
  \begin{equation*}
    i(p)\geq \frac \pi {\sqrt{\lambda}}.
  \end{equation*}
\end{thm}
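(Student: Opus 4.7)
The plan is to combine three classical ingredients: Rauch's comparison theorem to control the conjugate radius from the upper curvature bound, Klingenberg's lemma to relate the injectivity radius to the length of the shortest geodesic loop, and the soul theorem to exploit the topological consequences of nonnegative curvature.

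First, the hypothesis $K(\sigma) \leq \lambda$ implies via Rauch's first comparison theorem (Jacobi field comparison with the constant-curvature model) that the conjugate radius of every point of $M$ is at least $\pi/\sqrt{\lambda}$. Hence Klingenberg's identity
\begin{equation*}
  \inj(p) = \min\bigl(\mathrm{conj.rad}(p),\tfrac{1}{2}\ell(p)\bigr),
\end{equation*}
where $\ell(p)$ denotes the length of the shortest nontrivial geodesic loop at $p$, reduces the problem to bounding $\ell(p)$ from below.

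For the strict case $0 < K(\sigma) \leq \lambda$, the Gromoll-Meyer theorem says $M$ is diffeomorphic to $\rn^n$, hence simply connected with no nontrivial free homotopy classes of loops. Suppose for contradiction that some $p$ admits a geodesic loop $\gamma$ of length $L < 2\pi/\sqrt{\lambda}$. Since $L$ is below twice the conjugate radius, a Synge-type second variation along a one-parameter rotation of one endpoint velocity of $\gamma$ toward the other produces strictly shorter loops at $p$, the strict decrease being forced by $K > 0$ along the parallel transport. A variational descent together with the conjugate radius bound extracts a geodesic limit which, in the limit of vanishing length, gives a well-defined unit tangent direction at $p$ for a loop of zero length, which is absurd. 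Therefore $\ell(p) \geq 2\pi/\sqrt{\lambda}$ and $\inj(p) \geq \pi/\sqrt{\lambda}$.

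For the general nonnegative case, the Cheeger-Gromoll soul theorem supplies a compact totally geodesic submanifold $S \subset M$ (the soul) with $M$ diffeomorphic to the normal bundle of $S$. Compactness of $S$ together with the uniform conjugate radius estimate already gives a positive $i_1$ with $\inj|_S \geq i_1$. For an arbitrary $p \in M$, let $q$ be its foot on $S$ and let $\tau$ be the minimizing normal geodesic from $q$ to $p$. A short geodesic loop at $p$ can be slid back along $\tau$ via the Jacobi field comparison afforded by $K \leq \lambda$, producing a short loop close to $S$ whose length can be controlled by $\lambda$ and the length of $\tau$; this loop in turn forces a short geodesic loop at $q$, contradicting $\inj(q) \geq i_1$. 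The main obstacle is making this last sliding precise with effective constants depending only on $\lambda$ and the geometry of $S$, since once the topology is trivialized the strict case reduces quickly to Klingenberg's identity combined with a Synge-type second variation.
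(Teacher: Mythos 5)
The paper does not prove this statement; it is cited to Toponogov and Maeda and used as a black box, so there is no internal argument for you to match --- only the literature.

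Your reconstruction assembles the right toolbox (Rauch for the conjugate radius bound, Klingenberg's identity, Gromoll--Meyer for the strictly positive case, the Cheeger--Gromoll soul for the nonnegative case), but the central step is a gap in each branch. In the strict case, a geodesic loop at a fixed basepoint $p$ is in general not a smooth closed geodesic, so Synge's second-variation argument does not apply to it; what shortens such a loop is the \emph{first} variation coming from the corner angle, and exploiting that requires moving the basepoint, after which you no longer control $\ell$ at the original $p$. Even granting a descent, on a noncompact manifold the loops may drift to infinity, and nothing in your sketch forecloses this. The known proofs use noncompactness affirmatively through the existence of rays emanating from $p$ (combined with Toponogov angle comparison, or strict convexity of Busemann functions in positive curvature) to exclude short geodesic loops at $p$; rays are the ingredient your outline never touches. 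In the nonnegative case the soul is the right object, and $\inj|_S>0$ by compactness is fine, but ``sliding a loop back along $\tau$ via Jacobi comparison'' is not an argument: the Sharafutdinov retraction is $1$-Lipschitz yet does not carry geodesic loops to geodesic loops, and Jacobi comparison controls lengths of deformations without producing the required geodesic loop at the foot point. You acknowledge yourself that this step is open. The paper's own remark that the bound $i_0$ here depends on $g$ rather than on $\lambda$ alone is a warning that this case needs a genuine quantitative nondegeneracy result along rays from the soul. As it stands, this is a plausible strategy sketch, not a proof.
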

\begin{rmk}
  \label{rmk:2}
  Beware the lower bound $i_0$ in the
  first part, 
  depends not only $\lambda$ but also on $g$ in a general way. In fact,
  a sequence of 
  ``thiner'' and  ``thiner'' flat tori whose injetivity radii approach
   $0$ is an example.  We only use this part of the theorem in this paper.
\end{rmk}
Because the sectional
curvature is bounded from above, say by a positive constant $C_0$, by 
volume comparison, the volume of any ball with fixed radius
$i_0$ a lower bound. Rescaling the metric
if necessary, by Theorem~\ref{thm:3}, we get the 
following corollary.
\begin{cor}
  \label{cor:2}
  Let $(M^n,g)$ be a complete noncompact $n$-dimensional  riemannian
  manifold, with complex sectional curvature $\sec^c \geq 0$
  and $\scal\leq s_0$ for some $s_0 > 0$. Let $g_i(t)$ $i=1,2$ be 
  two ricci flow solutions on
  $M\times [0,T]$, with same initial data $g_i(0) = g$. If both of them are
  complete riemannian metrics and
  $\sec^c(g_i(t))\geq 0$ for every $t\in [0,T]$, 
  then there exists a constant $\epsilon>0$, s.t.
  $g_1(t) = g_2(t)$ on $M\times [0,\min\{\epsilon,T\}]$.
\end{cor}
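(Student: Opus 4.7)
The plan is to reduce Corollary~\ref{cor:2} to Theorem~\ref{thm:1} by means of a parabolic rescaling. The only hypothesis of Theorem~\ref{thm:1} not already in hand is a uniform volume lower bound on unit balls of $g$, so the first task is to extract one from the curvature assumptions alone.

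From $\sec^c \geq 0$, Lemma~\ref{lem:3} gives $\sec \geq 0$, and then Lemma~\ref{lem:4} together with $\scal \leq s_0$ yields a two-sided bound $0 \leq \sec \leq \lambda_0$ for some constant $\lambda_0 = \lambda_0(s_0, n)$. Toponogov's theorem (Theorem~\ref{thm:2}) then furnishes a uniform injectivity radius bound $\inj(M, g) \geq i_0 > 0$. Setting $r_0 := \min\{i_0,\ \pi/\sqrt{\lambda_0},\ 1\}$, G\"unther's volume comparison (applicable since $r_0 \leq \inj(p)$ and $r_0 < \pi/\sqrt{\lambda_0}$) gives
\[
\vol_g(B_g(p, r_0)) \geq V_{\lambda_0}(r_0) =: v_1 > 0 \qquad \text{for all } p \in M,
\]
where $V_{\lambda_0}(r_0)$ denotes the volume of a ball of radius $r_0$ in the $n$-dimensional simply connected space form of constant curvature $\lambda_0$.

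Next I would rescale: set $k := 1/r_0 \geq 1$ and $\tilde g := k^2 g$. Distances scale by $k$, volumes by $k^n$, and complex sectional curvature by $k^{-2}$, so $\widetilde{\sec}^c(\tilde g) \geq 0 \geq -1$, $\widetilde{\scal}(\tilde g) \leq k^{-2} s_0$, and
\[
\vol_{\tilde g}(B_{\tilde g}(p, 1)) = k^n \vol_g(B_g(p, r_0)) \geq k^n v_1 =: v_0 > 0.
\]
Correspondingly the parabolic rescalings $\tilde g_i(s) := k^2 g_i(k^{-2} s)$ are complete ricci flow solutions on $M \times [0, k^2 T]$ with common initial data $\tilde g$ and $\widetilde{\sec}^c(\tilde g_i(s)) \geq 0 \geq -1$ throughout. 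Theorem~\ref{thm:1} therefore applies to the pair $\tilde g_1, \tilde g_2$, delivering some $\tilde\epsilon = \tilde\epsilon(v_0, n) > 0$ with $\tilde g_1 \equiv \tilde g_2$ on $M \times [0, \min\{\tilde\epsilon, k^2 T\}]$. Unscaling produces $g_1 \equiv g_2$ on $M \times [0, \min\{\epsilon, T\}]$ for $\epsilon := \tilde\epsilon / k^2 > 0$, which is exactly the claimed conclusion.

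The only conceptually nontrivial step is producing the volume lower bound on geodesic balls of fixed radius; this is where the strengthened hypothesis $\sec^c \geq 0$ is used essentially, via Toponogov's injectivity radius estimate combined with the upper sectional curvature bound that $\scal \leq s_0$ then implies. Everything else, including the bookkeeping under the parabolic rescaling and the invocation of Theorem~\ref{thm:1}, is routine.
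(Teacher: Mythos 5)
Your proposal is correct and follows essentially the same route as the paper: Lemmas~\ref{lem:3} and \ref{lem:4} give a two-sided sectional curvature bound, Toponogov's Theorem~\ref{thm:2} gives an injectivity radius lower bound, volume comparison (G\"unther) converts that to a volume lower bound on small balls, and a rescaling reduces to Theorem~\ref{thm:1}. You have simply spelled out the rescaling bookkeeping more explicitly than the paper, which compresses it into ``Rescaling the metric if necessary.''
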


\subsection{Solutions on Three Manifolds}
\label{sec:case-three-manifold}

Next let us consider the three manifold. In this case sectional curvature,
complex sectional curvature, and curvature operator 
bounded from below are all equivalent. Then we have the following corollary.
\begin{cor}
  \label{cor:3}
    Let $(M,g)$ be a smooth complete riemannian three manifold with bounded sectional
  curvature. And the volume of unit balls in $(M,g)$ is uniformly
  bounded by $v_0>0$. Let $g_i(t)$, $i=1,2$
  $t\in[0,T]$ be two complete solutions of ricci flow with $g_i(0)=g$. If their
  sectional curvatures are both bounded from below for any 
  $t\in [0,T]$, then there exists a constant $C_n$ depending on dimension $n$, s.t.
  $g_1(t) = g_2(t)$ on $M\times [0,\min\{\epsilon,T\}]$, where
  $\epsilon=\min\{v_0/2C_n,1/(n-1)\}$.
\end{cor}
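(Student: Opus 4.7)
The plan is to reduce Corollary~\ref{cor:3} to Theorem~\ref{thm:1} by exploiting the fact, already announced by the authors in the paragraph preceding the corollary, that in dimension three the conditions ``sectional curvature bounded below'', ``complex sectional curvature bounded below'', and ``curvature operator bounded below'' are all equivalent. Once this equivalence is in hand, all the hypotheses of Theorem~\ref{thm:1} follow (after a harmless rescaling) from the stated hypotheses of the corollary.

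First I would establish the equivalence in dimension three. The space $\wedge^2 T_pM$ is three-dimensional and its complexification $\wedge^2 T_p^cM$ is a three-dimensional complex vector space. For any 2-form $\omega$, real or complex, the product $\omega\wedge\omega$ is a 4-form on a 3-dimensional underlying space and therefore vanishes identically; equivalently, every (complex) 2-form in dimension three is decomposable as a wedge of two (complex) vectors. Consequently both $\sec$ and $\sec^c$ can be expressed as Rayleigh quotients of the curvature operator $\R$ over all (resp.\ real or complex) normalized decomposable 2-forms, and since every element is decomposable these Rayleigh quotients range over the full unit sphere, giving
\begin{equation*}
\R\geq c \quad\Longleftrightarrow\quad \sec\geq c \quad\Longleftrightarrow\quad \sec^c\geq c.
\end{equation*}

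Next I would apply this equivalence to the data of the corollary. Since $(M,g)$ has bounded sectional curvature, it has sectional curvature bounded below by some $-k_0$, hence $\sec^c(g)\geq -k_0$, and the bound on $\riem$ immediately gives $\scal\leq s_0$ for some $s_0>0$. Similarly, each $g_i(t)$ has $\sec(g_i(t))$ bounded below by some constant, which by the equivalence translates into $\sec^c(g_i(t))$ being bounded below by the same constant. Rescaling the metric by a sufficiently large constant (which, as recalled in Section~\ref{sec:preliminaries}, scales $\sec^c$ by the reciprocal of the square and rescales time linearly) one may arrange $\sec^c(g)\geq -1$ and $\sec^c(g_i(t))\geq -1$ simultaneously, while the uniform lower bound on unit ball volumes is preserved up to a dimension-dependent factor. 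Theorem~\ref{thm:1} then applies and yields $g_1=g_2$ on $M\times[0,\min\{\epsilon,T\}]$ with $\epsilon$ of the advertised form, after absorbing the rescaling factors into the dimension-dependent constant $C_n$.

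The only delicate point is really the equivalence in dimension three, and even this is essentially a consequence of the accidental isomorphism $\wedge^2 \mathbb{R}^3\cong\mathbb{R}^3$ via Hodge star; the remaining rescaling argument is bookkeeping, since the constants in Theorem~\ref{thm:1} depend only on $n$ and $v_0$. I do not foresee any genuine obstacle beyond carefully tracking how the curvature normalization and the volume lower bound interact under the rescaling.
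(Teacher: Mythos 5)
Your approach is the same as the paper's: observe that in dimension three, lower bounds on sectional curvature, complex sectional curvature, and the curvature operator are all equivalent (because $\wedge^2 T_pM$ is three-dimensional, so every real or complex $2$-vector is decomposable, and after Hermitian Gram--Schmidt every Hermitian-unit decomposable $2$-vector is $u\wedge v$ with $u,v$ unitary orthonormal), and then apply Theorem~\ref{thm:1}. Your decomposability argument for the equivalence is correct and supplies the detail the paper only asserts. One bookkeeping caveat on the rescaling step: if the uniform lower bound is $\sec\geq -k_0$ with $k_0>1$ and you rescale by $k_0$, the rescaled unit-ball volume lower bound involves a Bishop--Gromov factor that depends on $k_0$ (roughly $V_{-1}(1)/V_{-1}(\sqrt{k_0})$), not merely on $n$; hence the resulting $\epsilon$ would also depend on $k_0$. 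This only reproduces the corollary's stated $\epsilon=\min\{v_0/2C_n,1/(n-1)\}$ when the lower bound is already $-1$, which is evidently the intended normalization (consistent with Theorem~\ref{thm:1}), so no rescaling is actually needed.
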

\begin{rmk}
  \label{rmk:8}
  Readers may notice that above corollary together with the
  fact that nonnegativity of sectional curvature is preserved by
  $3$-dimensional along ricci flow(Corollary 2.3 in
  \cite{MR2520796}), imply B-L Chen's strong uniqueness
  theorem (Theorem 1.1 in same paper above). However, this in
  essential is not a different proof because two crucial 
  components in it are from B-L Chen's paper. One is the
  preservation of nonnegative sectional curvature, the other one is
  Theorem~\ref{thm:5} which we use to prove our main theorem. 
\end{rmk}

\subsection{Warped Product Solution}
\label{sec:case-warped-product}

If we consider the ricci flow solutions in only the
class of rotationally symmetric metrics, the curvature condition can
be weaken to ricci bounded from below. Actually we can slightly
enlarge the class. For convenience of stating our corollary, we need the
following definition. 
\begin{definition}
  \label{def:3}
  We call $g$ a warped product metric on a manifold $M$ if they
  satisfy the following conditions:
  \begin{enumerate}
  \item $M$ is diffeomorphic to $I\times N$ for some interval $I$ and
    manifold $N$.
  \item There exist a smooth function $f:I\mapsto\rn^+$ and a
    riemannian metric $h$ on $N$ s.t. $g=\dd r^2+f^2(r)h$ on coordinate
    system $(r,p)\in I\times N$.
  \end{enumerate}
  We call $(N,h)$ the warped component and $\D r$ the radial direction.
\end{definition}
\begin{cor}
  \label{cor:4}
  Let $(M,g)$ be a smooth complete noncompact riemannian manifold 
  satisfying the following conditions:
  \begin{enumerate}
  \item $(M,g)$ has bounded sectional curvature.
  \item Volume of any unit ball in $(M,g)$ is bounded from below uniformly
    by a positive constant.
  \item $g$ is a warped product metric and its warped component has nonnegative
    curvature operator.
  \end{enumerate}
  Let $g_i(t)$, $i=1,2$ $t\in[0,T]$ be two complete
  solutions of ricci flow with $g_i(0)=g$. In addition, for any fixed
  $t\in [0,T]$, they satisfy the following conditions:
  \begin{enumerate}
  \item $g_i(t)$ is a warped product metric whose warped
    component is complete and has nonnegative curvature operator.
  \item  Ricci curvature of $g_i$ is bounded from below in
    radial direction by a uniform constant independent of $t$. 
  \end{enumerate}
  Then there exists an $\epsilon>0$ such that $g_1(t)=g_2(t)$
  for any $t\in [0,\min\{\epsilon,T\}]$. 
\end{cor}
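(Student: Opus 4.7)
The plan is to reduce Corollary~\ref{cor:4} to Theorem~\ref{thm:1} by showing that under the warped product hypotheses each solution $g_i(t)$ has complex sectional curvature uniformly bounded from below on a short time interval. For a warped product $g = \ddi r^2 + f^2 h$ on $I \times N$, the O'Neill formulas give $\sec^g(\partial_r, X) = -f''/f$ for any unit $X \in TN$, and, for $X, Y$ $g$-orthonormal in $TN$,
\begin{equation*}
    \sec^g(X, Y) = \frac{1}{f^2}\bigl(\sec^h(X, Y) - (f')^2\bigr).
\end{equation*}
These formulas extend complex-bilinearly, so that for any complex $2$-plane $\sigma \subset T^c M$ decomposed into its radial and tangential parts $\sigma_N \subset T^c N$, the complex sectional curvature is a convex combination bounded below by
\begin{equation*}
    \sec^c(\sigma) \geq \min\!\Bigl(-\tfrac{f''}{f},\ \tfrac{1}{f^2}\bigl(\sec^{c,h}(\sigma_N) - (f')^2\bigr)\Bigr).
\end{equation*}
Since each $(N, h_t)$ has nonnegative curvature operator we have $\sec^{c,h_t} \geq 0$, so the bound reduces to $\min(-f_t''/f_t,\ -(f_t')^2/f_t^2)$.

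Next I would translate the remaining hypotheses into bounds on $f''/f$ and $(f')^2/f^2$. The radial Ricci bound $\ric^{g_t}(\partial_r, \partial_r) = -(n-1)\, f_t''/f_t \geq -c$ immediately gives $f_t''/f_t \leq c/(n-1)$ uniformly in $t$. For the second quantity, the initial hypothesis of bounded sectional curvature, combined with the tangential sec-curvature formula above, provides a uniform upper bound $(f_0')^2/f_0^2 \leq K_0$ on $M$. To propagate this to $t > 0$, one derives the scalar evolution equation for $f_t$ induced by the Ricci flow coupled with the evolution of $h_t$ (available because the warped product form is preserved by hypothesis), and runs a short-time continuity or maximum principle argument to obtain $(f_t')^2/f_t^2 \leq K_1$ on some interval $[0, \epsilon']$.

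Combining these bounds yields $\sec^c(g_i(t)) \geq -K$ on $M \times [0, \epsilon']$ for a uniform constant $K > 0$. A parabolic rescaling $g_i \mapsto K g_i$, $t \mapsto K t$ puts us into the setting of Theorem~\ref{thm:1}, with $\sec^c \geq -1$ preserved by the rescaling and the volume lower bound for unit balls inherited from hypothesis (2) on $(M, g)$; the conclusion $g_1(t) = g_2(t)$ on $M \times [0, \min\{\epsilon, T\}]$ then follows after undoing the rescaling. The main obstacle is the propagation of the $(f_t')^2/f_t^2$ bound: the radial Ricci assumption directly controls $f_t''/f_t$ but not this quantity, so one must exploit the coupled PDE for $(f_t, h_t)$ on a short time interval. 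A plausible alternative is to bypass Theorem~\ref{thm:1} entirely by establishing a direct Bernstein-type curvature estimate for warped-product Ricci flows with the given structural assumptions, whereupon the Chen--Zhu uniqueness Theorem~\ref{thm:4} applies.
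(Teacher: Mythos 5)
You reduce correctly to bounding $-f_t''/f_t$ and $(f_t'/f_t)^2$, and you correctly observe that the uniform radial Ricci hypothesis directly gives $f_t''/f_t \leq c/(n-1)$. But you defer the control of $(f_t'/f_t)^2$ to an unspecified coupled-PDE maximum-principle or short-time continuity argument, calling it the ``main obstacle.'' That is exactly the step on which the corollary turns, so the proposal as written has a genuine gap.

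The paper closes it with a purely static Riccati comparison in the radial variable; no parabolic analysis and no evolution equation for the pair $(f_t,h_t)$ is needed. Set $\phi_t=\log f_t$. The radial Ricci bound you already wrote down is equivalent to $\phi_t''+(\phi_t')^2\leq C^2/n$, a scalar differential inequality in $r$ for each fixed $t$. Lemma~\ref{lem:6} compares it with the exact Riccati equation $A'+A^2=C^2/n$: if $\phi_t'$ ever dipped below $-C/\sqrt{n}$, the comparison solution (and hence $\phi_t'$ itself, by ODE comparison) would run to $-\infty$ in finite $r$, contradicting the smoothness of $f_t$ on the whole radial interval, which is guaranteed by completeness of $g_i(t)$; together with the comparison from above this bounds $\phi_t'$. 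Because the radial Ricci constant is hypothesized to be uniform in $t$, the resulting bound on $(\phi_t')^2$ is $t$-uniform, and via $\R(E_a)=(\lambda_a e^{-2\phi_t}-(\phi_t')^2)E_a$ with $\lambda_a\geq 0$ from the nonnegativity of the warped component's curvature operator, Proposition~\ref{prop:2} yields a $t$-uniform lower bound on the full curvature operator and hence on $\sec^c$. Your final rescaling to reach $\sec^c\geq -1$ and the appeal to Theorem~\ref{thm:1} then coincide with the paper's step. The moral is that the needed bound is available slicewise in $t$ as an ODE consequence of the radial Ricci hypothesis; reaching for a parabolic maximum principle here is both unnecessary and harder to execute.
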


Above corollary is a consequence of facts below. 
For convenience, we introduce $\phi=\log f$. Then
$f''/f=\phi''+(\phi')^2$ and $f'/f=\phi'$. Because $f>0$ is a smooth
function, $\phi$ is smooth wherever $f$ is defined. 
Then the curvatures (see e.g. Petersen\cite{MR2243772})
\begin{equation*}
  \R(\D r\wedge X)=-(\phi''+(\phi)^2)\D r\wedge X.
\end{equation*}
where $X$ is any vector field on $N$.
Let $\{E_a\}$, $1\leq a \leq n(n-1)/2$ be a o.n. frame of
$\wedge^2T_xN$, which diagonalize the curvature operator $\R^h$,
with correspondent eigenvalues $\lambda_a$. It is easy to see
\begin{equation*}
  \R(E_a)=(\lambda_a\e{-2\phi}-(\phi')^2)E_a.
\end{equation*} 
Let $e_\alpha$ $0\leq\alpha\leq n+1$ be an o.n. frame for $g$ and
$e_0=\D r$. Hence $e_i$ $i=1,2,\cdots,n$ are orthogonal frame on $(N,h)$.
We can choose $e_i$ further s.t. they diagonalize $\ric^h$, then for
any $i\neq j$,
\begin{equation*}
  \ric(e_0,e_0)=-n(\phi''+(\phi')^2)\quad \ric(e_0,e_i)=0=\ric(e_i,e_j).
\end{equation*}

Then let us prove the following lemma.
\begin{lemma}
  \label{lem:6}
  Let $a:[0,+\infty)\to \rn$ be a smooth function s.t. $a'+a^2\leq C^2$
  for some constant $C\geq 0$. Then, when $C>0$,
  $-C\leq a\leq \max\{a(0),C\}$, when $C=0$, $0\leq a\leq a(0)$.
\end{lemma}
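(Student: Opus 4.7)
The plan is to compare $a$ with explicit solutions of the Riccati ODE $b' = C^2 - b^2$ via a standard linearization/Gronwall argument. Concretely, whenever $a' \leq C^2 - a^2$ and $b' = C^2 - b^2$ with $a(t_0) \leq b(t_0)$, the difference $w = a-b$ satisfies $w' \leq -a^2 + b^2 = -(a+b)\,w$, so Gronwall on the (locally Lipschitz) coefficient $-(a+b)$ yields $a(t) \leq b(t)$ as long as both functions are defined. Dually, by flipping inequalities, a lower barrier $b' \geq C^2 - b^2$ with $b(t_0) \leq a(t_0)$ yields $a(t) \geq b(t)$.

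For the upper bound when $C > 0$, I would split on $a(0)$. If $a(0) \leq C$, the constant $b \equiv C$ is an exact solution of the ODE, and the comparison gives $a(t) \leq C = \max\{a(0),C\}$. If $a(0) > C$, then on the (possibly half-infinite) initial interval where $a > C$ we have $a' \leq C^2 - a^2 < 0$, so $a$ is strictly decreasing there and bounded above by $a(0)$; once $a$ crosses the value $C$ (if ever), the first sub-case takes over from that time onward. Either way $a(t) \leq \max\{a(0),C\}$.

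For the lower bound $a \geq -C$, I would argue by contradiction. Suppose $a(t_*) = -C - \varepsilon$ for some $\varepsilon > 0$ and some $t_* \geq 0$. Solve $b' = C^2 - b^2$ with $b(t_*) = -C - \varepsilon$; an elementary separation of variables (writing $v = -b$ so that $v' = v^2 - C^2$ with $v(t_*) > C$) shows $b(t) \to -\infty$ at some finite time $T > t_*$. By the comparison principle above, $a(t) \leq b(t)$ on $[t_*, T)$, so $a$ would be unbounded below before $T$, contradicting smoothness of $a$ on $[0,+\infty)$. Hence $a \geq -C$ throughout.

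The case $C = 0$ runs along the same lines. The blow-up comparison with $b' = -b^2$ starting from a negative value reaches $-\infty$ in finite time, forcing $a \geq 0$; and from $a' \leq -a^2 \leq 0$ monotonicity gives $a(t) \leq a(0)$. The only step that requires any care is the lower bound: one must choose the comparison solution at the hypothetical violation point $t_*$ rather than at $0$, and quantify the finite-time blow-up — but this is routine once the correct Riccati comparison is set up, and I do not anticipate a real obstacle.
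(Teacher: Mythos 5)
Your proposal is correct and follows essentially the same route as the paper: compare $a$ against solutions of the Riccati equation $A'+A^2=C^2$, obtain the upper bound from the constant supersolution $A\equiv C$ together with monotonicity in the region $a>C$, and obtain the lower bound $a\ge -C$ by observing that a Riccati solution started below $-C$ blows up to $-\infty$ in finite time, which would force $a$ to do the same. The only cosmetic difference is that the paper writes out the explicit formula for the Riccati solution and enumerates the three initial-value cases, whereas you invoke separation of variables and Gronwall without displaying the closed form; the substance is identical.
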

\begin{proof}
  The case $C>0$. Consider the correspondent ODE
  \begin{equation*}
    A'+A^2=C^2.
  \end{equation*}
  When $A(0)=\pm C$, $A\equiv \pm C$.  
  When $A(0)\neq \pm C$, its solution is
  \begin{equation*}
    A=C+\frac{2C}{K\e{2Cr}-1}\quad
    \text{where $K$ is a constant satisfying}\ A(0)=C(1+\frac 2 {K-1}).
  \end{equation*}
  Then let us has a closer look of $A$ with different initial values
  other than $\pm C$.
  \begin{enumerate}
  \item   When $A(0)> C$. Then $K > 1$. Hence  $A(r)$ is 
    decreasing and $A\to C$ as $r\to\infty$. So $A(0)\geq A\geq C$ as
    long as $r\geq 0$. 
  \item   When $-C<A(0)<C$. Then $K<0$. So $A(r)$ is increasing and $A\to C$
    as $r\to \infty$. So   $C\geq A\geq -C$ as long as $r\geq 0$.
  \item When $A(0)< -C$.  Then $0<K<1$. So $A(r)$ is decreasing and $A\to
    -\infty$ as $r\to r_0$ for some $0<r_0< \infty$.
  \end{enumerate}
  
  Let $A(0)=a(0)$, by comparison principle of ordinary differential
  equations, $a(r)\leq A(r)$ as long as $A(r)$ exists. So,
  when $a(0)<-C$, $a(r)$ becomes discontinuous at some finite $r$,
  which contradicts to our assumption on $a$. Consequently, $a(0)\geq
  -C$,
  thus $a(r)\leq A(r)\leq \max\{a(0),C\}$. Not only so, we can 
  further conclude that $a(r)\geq -C$. Otherwise, there exists some
  $\tilde{r}>0$ s.t. $a(\tilde{r})< -C$. Due to the fact that the
  differential inequality 
  satisfied by $a$ is translation invariant, it becomes discontinuous
  for the same reason mention as above. 

  Conclusion for  $C=0$ follows a similar argument. 
\end{proof}

We have the following fact for warped product metrics.
\begin{prop}
  \label{prop:2}
     Let $(N,h)$ be a complete riemannian $n$-manifold with nonnegative
   curvature operator. Let $g=\dd r^2 + f^2(r)h$ be a complete
   noncompact metric on $\rn\times N$ or $\rn^+\times N$. If the ricci
   curvature of $g$ $\ric(\D r, \D r)\geq -C^2$ for some constant $C\geq
   0$, then the  curvature operator $\R\geq -C'\id$ for constant
   $C'$. 
\end{prop}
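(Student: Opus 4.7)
The plan is to convert the radial Ricci lower bound into a scalar ODE inequality for $\phi=\log f$, feed it into Lemma~\ref{lem:6} to get uniform control on $\phi'$, and then read the lower bound on $\R$ directly off the block eigenvalue formulas recorded just above Lemma~\ref{lem:6}.

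First I would translate the hypothesis. Using $\ric(e_0,e_0)=-n(\phi''+(\phi')^2)$, the assumption $\ric(\D r,\D r)\geq -C^2$ is equivalent to
\begin{equation*}
\phi''+(\phi')^2\leq C^2/n.
\end{equation*}
Setting $a=\phi'$, this is exactly the hypothesis of Lemma~\ref{lem:6} with $C/\sqrt{n}$ in place of $C$.

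Next I would apply Lemma~\ref{lem:6} to obtain a uniform bound $|\phi'(r)|\leq B:=\max\{|\phi'(0)|,C/\sqrt{n}\}$. When the interval is $\rn^+$ the lemma applies directly. When the interval is $\rn$, the differential inequality is preserved under the reflection $\tilde\phi(s):=\phi(-s)$, since $\tilde\phi''+(\tilde\phi')^2=\phi''(-s)+\phi'(-s)^2\leq C^2/n$, so a second application of Lemma~\ref{lem:6} to $\tilde\phi'$ on $[0,\infty)$ covers $(-\infty,0]$ with the same bound $B$.

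Finally I would read off the eigenvalues of $\R$ in the two orthogonal blocks $\D r\wedge T_xN$ and $\wedge^2 T_xN$ of $\wedge^2 T_{(r,x)}M$. On the mixed block the formula above gives $\R(\D r\wedge X)=-(\phi''+(\phi')^2)\D r\wedge X\geq -(C^2/n)\D r\wedge X$ directly from the differential inequality. On the tangential block $\R(E_a)=(\lambda_a e^{-2\phi}-(\phi')^2)E_a\geq -B^2 E_a$, using $\lambda_a\geq 0$ from the nonnegative curvature operator hypothesis on $(N,h)$ and $e^{-2\phi}>0$ to drop the first term. Choosing $C'=\max\{C^2/n,B^2\}$ yields $\R\geq -C'\id$, as desired.

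The only place that needs care is the uniform pointwise control on $\phi'$, which is exactly what Lemma~\ref{lem:6} is designed to provide: once $\phi'$ is finite at a single base point, which is automatic because $g$ is smooth, the differential inequality forces uniform two-sided bounds. The nonnegativity of $\R^h$ is equally decisive for the tangential block, since otherwise $\lambda_a e^{-2\phi}$ could become unboundedly negative wherever $f$ is small and no control on $\phi'$ alone would save us.
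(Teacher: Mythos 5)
Your proposal is correct and follows the same route as the paper: convert $\ric(\D r,\D r)\geq -C^2$ into $\phi''+(\phi')^2\leq C^2/n$, invoke Lemma~\ref{lem:6} to bound $\phi'$, and read the block eigenvalue formulas to bound $\R$ from below. You additionally justify the paper's ``WLOG $[0,+\infty)$'' with the explicit reflection argument, which is a harmless and correct elaboration.
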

\begin{rmk}
  \label{rmk:7}
  When $f(r)$ is defined on  $\rn^+\times N$, $g$ is complete if and
  only if
  $f(r)$ is with certain restrictions and  $(N,h)$ is 
  a standard sphere (up to scaling). Details can be found in
  Petersen\cite{Petersen-warped-product-e}.
\end{rmk}
\begin{proof}
  Without loss of generality, assume $f>0$ is a smooth function on
  $[0,+\infty)$. 
  Since $\ric(\D r,\D r)\geq -C^2$, 
  \begin{equation*}
    \phi''+(\phi')^2\leq \frac{C^2}{n}.
  \end{equation*}
  By Lemma \ref{lem:6}, $\phi'$ is bounded. 
  And by assumption $\R^h$ is nonnegative, $\R(E_a)\geq
  -(\phi')^2E_a$ is bounded from below.
\end{proof}
\begin{proof}[Proof of Corollary~\ref{cor:4} ]
  By lemma above, we get $g_i(t)$ are both with curvature
  operators bounded from below,  hence with complex sectional
  curvature bounded from below. Then the
  corollary follows from Theorem~\ref{thm:1}.
\end{proof}

\noindent\textsc{acknowledgement: } Both authors thank professor Xiu-xiong
Chen for his valuable suggestions, constant motivating, and
encouragement. The first author thanks professor Xiu-xiong Chen and 
Mathematics Department of Stony Brook University for their hospitality
during his visit and preparation of this paper. 



\end{document}